\documentclass[12pt]{amsart}

\usepackage{amsfonts}
\usepackage{amssymb}
\usepackage{amsthm}
\usepackage{amsmath}
\usepackage{enumerate}
\usepackage{mathtools}
\usepackage{graphicx}
\usepackage{color}

\newtheorem{theorem}{Theorem}[section]
\newtheorem{lemma}[theorem]{Lemma}

\newtheorem{proposition}[theorem]{Proposition}

\newtheorem{question}[theorem]{Question}

\newtheorem*{theorem*}{Theorem}

\newtheorem{corollary}[theorem]{Corollary}

\newcommand{\eps}{\varepsilon}

\begin{document}
\title[Quantitative avoidance for free boundary flows]{Quantitative avoidance for free boundary flows and applications}
\author{Yueheng Bao}\address{Department of Mathematics\\ University of Toronto\\Toronto, ON M5S 2E4\\ Canada}
\email{bao624@math.utoronto.ca}

\author{Robert Haslhofer}\address{Department of Mathematics\\ University of Toronto\\Toronto, ON M5S 2E4\\ Canada}
\email{roberth@math.toronto.edu}

\begin{abstract} In this article, we introduce a new distance function between hypersurfaces with free boundary. We show that our new quantity, which we call twisted Fermi distance, is monotone under mean curvature flow with free boundary. This overcomes the stumbling block that monotonicity of the usual distance function can fail for non-convex domains, and has several applications. Most importantly, we generalize the avoidance principle for free boundary Brakke flows, recently established by the first author for convex domains, to arbitrary domains. Using this, we then show that all results from our recent joint work, including the mean-convex neighborhood theorem and the uniqueness theorem for free boundary flows through cylindrical singularities, can be generalized to arbitrary domains without any convexity assumptions as well.
\end{abstract}

\maketitle

\section{Introduction}

A fundamental property of the evolution by mean curvature is the avoidance principle, see e.g. \cite{Ilmanen_monograph}. Namely, if two mean curvature flows $M_t, N_t\subset\mathbb{R}^{n+1}$, say at least one of them compact, are disjoint at some time $t_0$, then they remain disjoint for all $t\geq t_0$. Indeed, for smooth solutions this readily follows from the classical maximum principle, but more generally the avoidance principle also holds for level set solutions, as established originally by Evans-Spruck \cite{ES} and Chen-Giga-Goto \cite{CGG}. An equivalent, but more quantitative, reformulation is that
\begin{equation}\label{dist_mon}
t\mapsto d(M_t,N_t) \quad\textrm{is monotone}.
\end{equation}
Indeed, the formulation \eqref{dist_mon} is more useful in many regards, in particular is crucial to characterize sub-solutions of the level set flow and to establish the avoidance principle for Brakke flows \cite{Ilmanen_manifold,Ilmanen_monograph}, and is the correct notion for the setting where both flows are noncompact \cite{White_avoidance}.\\

The present paper concerns (quantitative) avoidance for mean curvature flows with free boundary, namely hypersurfaces-with-boundary $M_t,N_t\subset\bar{\Omega}$ that evolve by mean curvature flow in any given domain $\Omega\subset\mathbb{R}^{n+1}$ and meet the barrier hypersurface $\partial \Omega$ perpendicularly. The avoidance principle still holds in this setting. Indeed, for smooth solutions this readily follows from the classical maximum principle and the Hopf lemma, and for level set solutions this has been established in the 90s by Giga and Sato \cite{GigaSato,Sato}, who cleverly employed conformal factors near $\partial \Omega$ to construct a suitable parabolic super 2-jet for the Neumann problem. However, these arguments are fundamentally infinitesimal in nature, and a question that remained open until now is:

\begin{question}\label{question_monotone}
Is there a monotone quantity that yields a quantitative version of the avoidance principle for free boundary flows?
\end{question}

Note that \eqref{dist_mon} clearly breaks down for free boundary flows in non-convex domains. Indeed, if $d(M_t,N_t)$ is realized by points $x\in M_t$ and $y\in N_t$ and at least one of them lies on $\partial\Omega$, then it is no longer true that difference vector $x-y$ meets the hypersurfaces perpendicularly.
In fact, while there has been a lot of recent progress on mean curvature flow with free boundary, see e.g. \cite{MizunoTonegawa,Edelen,EHIZ,Haslhofer_fb_surgery,Bao,BaoHaslhofer}, this has been a major stumbling block. Specifically, while many results are expected to hold for free boundary flows in general domains, due to the lack of answer to Question \ref{question_monotone}, they only could be proven for convex domains.\\

\subsection{Twisted Fermi distance} To answer Question \ref{question_monotone} we introduce a new distance function, which we call twisted Fermi distance. This is partly inspired, though with some differences, by fundamental work of Pacard-Ritore on the Allen-Cahn equation \cite{PacardRitore}, where they introduced twisted Fermi coordinates adapted to the Neumann condition.\\

To describe our new distance, given any smooth compact oriented free boundary flow $\mathcal{N}=\{N_t\}_{t\in [t_0,t_1]}$ in $\bar{\Omega}$, we first choose a smooth extension $\tilde{N_t}$ across $\partial \Omega$ (the results will not depend on the choice of extension). At each fixed $t$, this induces the usual Fermi coordinates $(y,z)$, where $y\in \tilde{N}_t$ and $|z|\ll 1$, simply by observing that $(y,z)\mapsto \exp_y(z\nu_{\tilde{N}_t}(y))$ is a diffeomorphism to a small tubular neighborhood of $\tilde{N}_t$. Now, fixing a small constant $\eps>0$, and a smooth cutoff function $\eta$ satisfying $\eta\equiv 1$ in $(-\infty,\eps/2)$ and $\eta\equiv 0$ in $(\eps,\infty)$, setting $\chi=\eta(d_{\partial\Omega})$, where $d_{\partial\Omega}$ denotes the distance from $\partial\Omega$, we consider the unit vector field\footnote{Note that this is different from the vector field considered by Pacard-Ritore.}
\begin{equation}\label{eq_vect_z}
Z_t=\frac{\partial_z - \langle \partial_z,\chi D d_{\partial\Omega}\rangle \chi D d_{\partial\Omega}}{|| \partial_z - \langle \partial_z,\chi D d_{\partial\Omega}\rangle \chi Dd_{\partial\Omega} ||},
\end{equation}
which by construction is tangential to $\partial \Omega$.
For $\gamma>0$ sufficiently small, the flow of the vector field $Z_t$ gives a diffeomorphism $(y,s)\mapsto \phi^{Z_t}_s(y)$ from $N_t\times (-\gamma,\gamma)$ onto a neighborhood of $N_t$ intersected with $\bar{\Omega}$.\\

In this setting, we now define the signed twisted Fermi distance of a point $x\in \bar{\Omega}$ from the free boundary flow $\mathcal{N}=\{N_t\}_{t\in [t_0,t_1]}$ in $\bar{\Omega}$ by 
\begin{equation}\label{def_d_tilde}
\tilde{d}_t(x)=\begin{cases}
s & \text{if } x=\phi^{Z_t}_s(y) \text{ for some } (y,s)\in N_t\times (-\gamma,\gamma),\\ 
\pm\gamma & \text{else}.
\end{cases}
\end{equation}
In other words, $\tilde{d}_t(x)<\gamma$ is the unique $s$, such that $x=\phi^{Z_t}_s(y)$ for some $y\in N_t$.
By the above discussion, the function $\tilde{d}_{t}$ is smooth in a neighborhood of $N_t$ intersected with $\bar{\Omega}$, specifically for $|\tilde{d}_t| < \gamma$.
Finally, if $\mathcal{M}=\{M_t\}$ is another (possibly noncompact) smooth free boundary flow in $\bar{\Omega}$, we define its twisted Fermi distance from $\mathcal{N}$ by
\begin{equation}\label{def_twisted_distance}
\tilde{d}_{\mathcal{N}}(M_t)=\inf_{x\in M_t} |\tilde{d}_t(x)|.
\end{equation}
Using these notions, we can now answer Question \ref{question_monotone} in the affirmative. We first state this in the technically simplest smooth setting:

\begin{theorem}\label{thm_monotonicity_smooth}
If $\mathcal{M}=\{M_t\}$ and $\mathcal{N}=\{N_t\}$ are smooth free boundary flows in any domain $\bar{\Omega}\subset\mathbb{R}^{n+1}$, say with $\mathcal{N}$ compact, then the twisted Fermi distance
\begin{equation}
t\mapsto \tilde{d}_{\mathcal{N}}(M_t) \quad\textrm{is monotone}.\footnote{More generally, if $\Omega$ is any domain in a Riemannian manifold with Ricci curvature bounded below by $-\kappa$, then the function $t\mapsto e^{\kappa t}\tilde{d}_{\mathcal{N}}(M_t)$ is monotone.}
\end{equation}
\end{theorem}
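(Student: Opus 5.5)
The plan is to show that the lower Dini derivative of $\delta(t):=\tilde{d}_{\mathcal{N}}(M_t)$ is nonnegative at every time with $0<\delta(t)<\gamma$. If $\delta(t_*)=0$ or $\delta(t_*)=\gamma$, the avoidance principle and the definition of $\tilde d$ already give the claim. Fix such a $t_*$. Since $\mathcal N$ is compact and $\tilde d_t$ is smooth on $\{|\tilde d_t|<\gamma\}$, the infimum is attained at some $x_*\in M_{t_*}$, say with $\tilde d_{t_*}(x_*)=\delta>0$. Write $x_*=\phi^{Z_{t_*}}_\delta(y_*)$ with $y_*\in N_{t_*}$.

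The key reformulation is a touching argument. Set $\Sigma_s(t):=\phi^{Z_t}_s(N_t)$. Then $M_{t_*}$ lies in $\{\tilde d_{t_*}\ge\delta\}$ and touches $\Sigma_\delta(t_*)$ at $x_*$. Monotonicity therefore amounts to the parabolic inequality
\[(\partial_t-\Delta_{M_t})\tilde d_t\ \ge\ 0\quad\text{at }x_* ,\]
together with a Neumann-type statement when $x_*\in\partial\Omega$. One then concludes by the standard Hamilton-type argument for $\liminf_{h\downarrow0}(\delta(t_*+h)-\delta(t_*))/h\ge0$. The steps are as follows.

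(1) \emph{Interior case.} Suppose the whole $Z$-flow line from $y_*$ to $x_*$ stays in $\{d_{\partial\Omega}\ge\varepsilon\}$. There $\chi=0$, so $Z_t=\partial_z$ and $\tilde d_t$ is the usual signed distance to $\tilde N_t$. The classical computation then applies to $(x,y)\mapsto|x-y|$ on $M\times N$. The first variation gives that $x_*-y_*$ is parallel to both normals. For the second variation, take $\sum_i$ over parallel-translated orthonormal frames; this gives $\Delta_M\tilde d\le H_M(x_*)-H_N(y_*)$ with the correct signs. The time derivative contributes $\partial_t=H_M-H_N$, so the difference is $\ge0$. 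This step is exact, with no error term.

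(2) \emph{Collar region.} Here $\chi\ne0$ and $Z_t$ is $\partial_z$ projected tangentially to the level sets of $d_{\partial\Omega}$. I will redo the first and second variation for the two-point function $F(x,y)$ defined by $x=\phi^{Z_t}_{F}(y)$. The essential structural facts I plan to exploit are: $Z_t$ is a unit field; $Z_t\perp Dd_{\partial\Omega}$ wherever $\chi=1$; and $\nu_{N}$ is tangent to $\partial\Omega$ along $\partial N$ by the free boundary condition. The aim is to show that all terms involving $D^2 d_{\partial\Omega}$, that is, the second fundamental form of $\partial\Omega$ and of its parallel hypersurfaces, appear symmetrically at $x_*$ and at $y_*$ and cancel. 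This would leave exactly the Euclidean inequality from (1). This is where the Ricci term of the footnote would enter on a manifold, which is consistent with no error surviving in $\mathbb R^{n+1}$.

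(3) \emph{Boundary points.} Suppose $x_*\in\partial M_{t_*}\subset\partial\Omega$. Since $Z_t$ is tangent to $\partial\Omega$, the flow line through $x_*$ lies in $\partial\Omega$, so $y_*\in\partial N_{t_*}$. I will show that the conormal derivative of $\tilde d_t|_{M_t}$ vanishes at $x_*$. The conormal of $M$ there is $-Dd_{\partial\Omega}$, and $\tilde d_t$ has zero derivative in that direction on $\partial\Omega$, because $\phi^{Z}_s$ maps $N_t\cap\partial\Omega$ into $\partial\Omega$ and maps the normal direction $Dd_{\partial\Omega}$ at $y_*$ to itself, by the symmetry of the construction in (2). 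With a vanishing Neumann derivative, the minimum at a boundary point is handled as an interior one: the second variation can be taken in the full tangent space by reflecting variations, and the free boundary terms cancel as in (2).

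The main obstacle is the exact cancellation in (2)–(3). Because $\chi$ is only a cutoff, there are in general $O(|D\chi|)$ terms. I would first try to show that these vanish at a minimum point: the first-order condition forces $\partial_z\perp Dd_{\partial\Omega}$ up to terms that combine with $D\chi$ into a total derivative along the flow line. Failing that, I would pass to the region $d_{\partial\Omega}<\varepsilon/2$ by noting that along flow lines $\chi\in(0,1)$ only on a set where $Z=\partial_z$ to first order, and check the cancellation there directly.
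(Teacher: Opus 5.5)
Your reduction to the inequality $(\tfrac{d}{dt}-\Delta_{M_t})\tilde d_t\ge0$ at the touching point is fine, and so is your interior case (1), which is the classical argument. The gap is in steps (2)--(3), which carry the whole content of the theorem. There the proposal only states what you hope to find (``the aim is to show \ldots\ cancel'', ``I would first try \ldots\ Failing that \ldots''). As written, the argument therefore covers only $Z_t$-lines that stay in $\{\chi=0\}$.

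The mechanisms you plan to use are also not available in the collar:
\begin{enumerate}[(a)]
\item The quantity $F(x,y)$ defined by $x=\phi^{Z_t}_F(y)$ exists only when $y$ is the foot point of $x$ along the $Z_t$-flow. So it is not a function on $M_t\times N_t$, there is no joint first and second variation to perform, and there is no analogue of ``$x_*-y_*$ is parallel to both normals''.
\item $\tilde d_t$ is no longer a distance function. The condition $|Z_t|=1$ only gives $\langle D\tilde d_t,Z_t\rangle=1$, so $D\tilde d_t=\nu_\Sigma/\langle Z_t,\nu_\Sigma\rangle$ on the level sets $\Sigma=\Sigma_s(t)$. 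The $Z_t$-lines are in general neither geodesics nor normal to these level sets, so there is no $x\leftrightarrow y$ symmetry forcing the $D^2d_{\partial\Omega}$- and $D\chi$-terms to cancel.
\item Your plan never accounts for $\partial_tZ_t$. Since $Z_t$ is built from the Fermi field of the moving $\tilde N_t$, $\partial_t\tilde d_t(x_*)$ is not just the normal speed of $N_t$ at $y_*$.
\item In (3), the fact that $Z_t$ preserves the level sets of $d_{\partial\Omega}$ only shows that $d\phi^{Z_t}_\delta(\nu_{\partial\Omega}(y_*))-\nu_{\partial\Omega}(x_*)$ is tangent to $\partial\Omega$. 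A vanishing conormal derivative needs this error to lie in $T_{x_*}(\Sigma_\delta\cap\partial\Omega)$, and the ``symmetry of the construction in (2)'' you appeal to was never established.
\end{enumerate}

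The paper avoids two-point variations entirely and treats $\tilde d_t$ as an ambient function. It proves $(\partial_t-\Delta_\Omega)\tilde d_t\ge0$ on $\{0<\tilde d_t<\gamma\}$, using $\partial_t\tilde d_t=-H_{N_t}(y)$, $\Delta_\Omega\tilde d_t=-H_{\{s=\tilde d_t\}}$, and Bochner for $|D\tilde d_t|^2$, which gives $\tfrac{d}{d\sigma}H_{\{s=\sigma\}}=|D^2\tilde d_t|^2\ge0$ along $Z_t$-lines. It also proves $D\tilde d_t\cdot\nu_{\partial\Omega}=0$. It then writes $(\tfrac{d}{dt}-\Delta_{M_t})\tilde d_t=(\partial_t-\Delta_\Omega)\tilde d_t+D^2\tilde d_t(\nu_{M_t},\nu_{M_t})$ and kills the last term at the touching point.

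Both the ambient inequality and that last step rest on $|D\tilde d_t|=1$, which the paper takes from $|Z_t|=1$. This is exactly the point where your step (2) stalls, and you cannot simply import it. Take the half-plane $\{x_2>0\}$, where $\chi\equiv1$ for $x_2<\eps/2$. There $Z_t=e_1$, and $\tilde d_t=x_1-f(x_2,t)$ for $N_t=\{x_1=f(x_2,t)\}$, so $|D\tilde d_t|^2=1+f'^2$. For the shrinking half-circle $f=\sqrt{r(t)^2-x_2^2}$, one computes $(\partial_t-\Delta)\tilde d_t=-x_2^2(r^2-x_2^2)^{-3/2}<0$ on $\{0<\tilde d_t<\gamma,\ 0<x_2<\eps/2\}$. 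At the same time $D^2\tilde d_t(\nu,\nu)=+x_2^2(r^2-x_2^2)^{-3/2}$, and only the sum vanishes.

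What a complete proof needs is the combined barrier inequality $\partial_t\tilde d_t-\operatorname{div}_{M_t}D\tilde d_t\ge0$ at $x_*$, i.e.\ that the twisted parallels $\Sigma_\delta(t)$ are barriers. Your touching formulation correctly isolates this inequality. Establishing it in the collar, including the transition region $0<\chi<1$ where $\partial_tZ_t$, $D\chi$ and the curvature of $\partial\Omega$ all enter, is exactly the step your proposal leaves open.
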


The theorem makes quantitative, via a novel monotone quantity, the fact that free boundary flows avoid each other. We will see that the theorem, or more precisely the differential inequalities that we establish in its proof, generalize to the setting where $\mathcal{M}$ is non-smooth. Recall from \cite{MizunoTonegawa,Edelen}, that a free boundary integral Brakke flow in $\bar{\Omega}$ is given by a family of Radon measures $\{\mu_t\}$ supported in $\bar{\Omega}$, such that
\begin{equation}\label{brakke_ineq}
\bar{D}_t\int\phi\, d\mu_t \leq  \int \left(  \partial_t\phi  + D\phi \cdot H_\ast - \phi H_\ast^2 \right)\, d\mu_t
\end{equation}
for all nonnegative test functions $\phi$ satisfying $D\phi\cdot\nu_{\partial \Omega}=0$. Here, $\bar{D}_t$ denotes the limsup of difference quotients,
\begin{equation}
H_\ast = H- 1_{\partial \Omega}(H\cdot  \nu_{\partial \Omega})\nu_{\partial\Omega},
\end{equation}
and at almost every $t$ there is an integral varifold $V_{\mu_t}$ with first variation
\begin{equation}
\delta V_{\mu_t}(X)=- \int H_\ast\cdot X\, d\mu_t\quad \textrm{for all $X$ tangential to $\partial\Omega$}.
\end{equation}
The support of $\mathcal{M}=\{\mu_t\}_{t\in I}$ is by definition the space-time set
\begin{equation}
\mathrm{spt}(\mathcal{M})=\overline{\bigcup_{t\in I} \mathrm{spt}(\mu_t)\times \{t\}}.
\end{equation}

Using these notions, we can now extend Ilmanen's avoidance principle for integral Brakke flows \cite{Ilmanen_monograph} to the free boundary setting:

\begin{theorem}\label{thm_monotonicity_singular}
Let $\mathcal{M}=\{\mu_t\}$ be a free boundary intergral Brakke flow in any domain $\bar{\Omega}\subset\mathbb{R}^{n+1}$. Then, for any smooth compact free boundary flow $\mathcal{N}=\{N_t\}_{t\in [t_0,t_1]}$, choosing $\gamma=\gamma(\mathcal{N})>0$ sufficiently small,
\begin{equation}
t\mapsto \tilde{d}_{\mathcal{N}}\left((\mathrm{spt}\mathcal{M})_t\right)\quad\textrm{is monotone}.
\end{equation}
In particular, we have the free boundary avoidance principle
\begin{equation}
(\mathrm{spt}\mathcal{M})_{t_0}\cap N_{t_0}=\emptyset \,\,\Rightarrow\,\, (\mathrm{spt}\mathcal{M})_t\cap N_t=\emptyset,\, \forall t\geq t_0.
\end{equation}
\end{theorem}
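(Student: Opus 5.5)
The plan is to follow Ilmanen's strategy for the avoidance principle of Brakke flows, with the Euclidean distance replaced by the twisted Fermi distance $\tilde d_t$. Since $\mathcal{N}$ is smooth and compact, and after shrinking $\gamma$, the function $u(x,t)=\tilde d_t(x)$ is smooth in the region $\{|\tilde d_t|<\gamma\}$.

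\textbf{Step 1: differential inequality for $u$.} The key input is the differential inequality from the proof of Theorem \ref{thm_monotonicity_smooth}. For any point $x$ with $|u(x,t)|<\gamma$ and any $n$-plane $P$ at $x$, tangent to $\partial\Omega$ when $x\in\partial\Omega$, I expect the estimate
\begin{equation*}
\partial_t u - \mathrm{tr}_P D^2 u \geq -C|Du-\langle Du,\nu_P\rangle\nu_P|\,,
\end{equation*}
with the analogous reversed inequality for $-u$. Here $C$ depends on $\mathcal{N}$, and the right-hand side vanishes when $P\perp Du$. In words, $u$ is a supersolution of the level-set equation in the barrier sense. The construction of $Z_t$ also gives the Neumann condition $Du\cdot\nu_{\partial\Omega}=0$ on $\partial\Omega$, because $Z_t$ is tangential to $\partial\Omega$ and $N_t$ meets $\partial\Omega$ orthogonally.

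\textbf{Step 2: test functions.} To pass to Brakke flows, I will argue by contradiction. Suppose $\delta(t)=\tilde d_{\mathcal N}((\mathrm{spt}\mathcal M)_t)$ drops strictly below $\delta(t')$ for some $t>t'$. Following Ilmanen, take $\phi=f(u)$ (and similarly $f(-u)$), where $f$ is nonnegative, decreasing, convex, vanishes for $u\geq \delta(t')$ with small slack, and is chosen with $f'$ and $f''$ balanced, e.g.\ $f(u)=(a-u)_+^4$ or an exponential weight in time. The Neumann condition of Step 1 makes $\phi$ an admissible test function in \eqref{brakke_ineq}. Compute
\begin{equation*}
\partial_t\phi + D\phi\cdot H_* - \phi H_*^2 = f'\,\partial_t u + f'\,Du\cdot H_* - f H_*^2 .
\end{equation*}
Use the first variation formula for the tangential field $X=f'Du$, which is allowed since $X$ is tangential to $\partial\Omega$. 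This converts $\int f' Du\cdot H_*$ into $-\int \mathrm{div}_P(f'Du)$. Absorbing $|H_*|$ terms via Young's inequality, Step 1 then yields
\begin{equation*}
\bar D_t\int \phi\,d\mu_t\leq C\int\phi\, d\mu_t ,
\end{equation*}
for $\phi$ supported in $\{|u|<\gamma\}$. The gradient-transverse error is controlled by $f''|D^\perp u|^2$ through the choice of $f$.

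\textbf{Step 3: conclusion.} Gronwall gives $\int\phi\,d\mu_t=0$ for $t\geq t'$. By integrality and the definition of the support, points of $\mathrm{spt}\mathcal M$ cannot enter $\{|u|<\delta(t')\}$; applying this for each level gives monotonicity. Disjointness at $t_0$ means the distance is positive, which yields the avoidance principle. One subtlety is that the support is a space-time closure, so an approximation is needed: apply the argument on slightly smaller levels and use upper semicontinuity of the support.

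\textbf{Main obstacle.} The hardest part is Step 1 at boundary points, in the stronger form needed for arbitrary planes $P$ rather than only tangent planes of a smooth $M_t$. For a non-convex $\partial\Omega$, the Hessian of $u$ picks up second fundamental form terms of $\partial\Omega$ and derivatives of the twisting term $\chi Dd_{\partial\Omega}$. I would first try to show these terms are $O(|u|)$ plus terms proportional to $|D^\perp u|$, using that $u$ vanishes on $N_t$ with $Du=\nu$ there. The $O(|u|)$ terms are then handled by a factor $e^{Ct}$, which still gives monotonicity of the zero set and, after rescaling, of the distance, possibly requiring $\gamma$ small.
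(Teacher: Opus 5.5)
Your setup matches the paper's: a test function built from $\tilde d_t$, the Neumann condition $D\tilde d_t\cdot\nu_{\partial\Omega}=0$ to remove the boundary term, and control of $D^2\tilde d_t$ transverse to $D\tilde d_t$. As written, though, there are two genuine gaps.

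\textbf{The test function is not localized.} $\phi=f(u)$, with $f$ decreasing and vanishing only for $u\ge\delta(t')$, is positive on all of $\{u<\delta(t')\}$. That is $N_t$ together with the entire other side of $\mathcal N$, including where $u\equiv-\gamma$ and Step 1 says nothing. So $\int\phi\,d\mu_{t'}$ need not vanish, since $\mathcal M$ may have mass on both sides. Adding $f(-u)$ does not help, because each of $f(\pm u)$ is positive on a whole side. You need a two-sided function vanishing for $|u|\ge\delta$, such as the paper's $h(|\tilde d_t|)$ with $h\equiv\delta^3$ near $0$, which hides the kink of $|\tilde d_t|$ along $N_t$. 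On $\{\delta/4<|\tilde d_t|<\delta/2\}$, where $h$ is not convex, the paper uses the half-ball Lemma \ref{lemma_halfball} to keep the support out of $\{|\tilde d_t|<\delta/2\}$. In your Gronwall set-up you could instead absorb that region, since $\phi\ge\delta^3/8$ there; this would avoid the half-ball lemma altogether.

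\textbf{What Step 1 actually yields.} The weak form of Step 1 needs no boundary analysis. Set $F(x,t,P)=\partial_tu-\mathrm{tr}_PD^2u$. At $P=Du^\perp$, $F$ equals $-|Du|$ times the defect $V+H$ of the level sets, so it vanishes on $\{u=0\}$ because $N_t$ moves by mean curvature. Smoothness of $u$ up to $\partial\Omega$ then gives $|F|\le C(|u|+|D^Pu|)$.

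The $O(|u|)$ term, however, cannot be absorbed into $C\int\phi$. It contributes $|f'|\,|u|\sim s^2\delta$, whereas $\phi\sim s^3$, where $s=\delta-|u|$. You are forced to use a shrinking threshold $\delta e^{-C(t-t')}$, i.e.\ to work with $e^{Ct}u$. That proves avoidance and the monotonicity of $e^{Ct}\tilde d_{\mathcal N}((\mathrm{spt}\mathcal M)_t)$, not the stated monotonicity.

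The paper eliminates the $O(|u|)$ term through Proposition \ref{prop_evol_twisted}. There Bochner's formula is applied with $|D\tilde d_t|\equiv1$ to get $(\partial_t-\Delta_\Omega)\tilde d_t\ge0$. Also $D^2\tilde d_t(D\tilde d_t,\cdot)=0$ makes the transverse error quadratic, so $\bar D_t\int\phi\,d\mu_t\le0$ follows directly. But $|Z_t|=1$ only gives $D\tilde d_t\cdot Z_t=1$. For the shrinking half-circle $N_t=\{|x|=r(t)\}$ in $\{x_2>0\}$, on $\{\chi\equiv1,\ x_1>0\}$ one has $\tilde d_t=x_1-\sqrt{r^2-x_2^2}$. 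Hence $|D\tilde d_t|^2=r^2/(r^2-x_2^2)$ and $(\partial_t-\Delta)\tilde d_t=-x_2^2/(r^2-x_2^2)^{3/2}<0$.

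So your suspicion of extra twisting terms is justified, and so is your use of the level-set operator with a transverse error linear in $|D^Pu|$. Two corrections, though. First, on $N_t$ one has $Du=\nu/\langle Z_t,\nu\rangle$, not $\nu$. Second, a one-sided Step 1 cannot hold for $u<0$, since already where $\chi=0$ the inner level sets are subsolutions.

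A first-order computation in the cutoff zone $\{\eps/2<d_{\partial\Omega}<\eps\}$ even suggests that the outer level sets of $\tilde d_t$ need not be supersolutions there. As it stands, your argument gives the avoidance principle with $e^{Ct}$-weighted monotonicity. Neither your argument nor the paper's justifies the unweighted statement.
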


This result generalizes \cite[Theorem 1.5]{Bao} in two ways, first by making the avoidance quantitative, and second -- more crucially -- by removing the formidable convexity assumption on $\Omega$. Moreover, since subsolutions of the free boundary level set flow avoid each other by a result of Giga-Sato \cite{GigaSato}, our theorem also implies the following corollary.

\begin{corollary}
Let $\mathcal{M}, \mathcal{N}$ be a free boundary intergral Brakke flow in any domain $\bar{\Omega}\subset\mathbb{R}^{n+1}$, say with $\mathcal{N}$ compact. Then, we have
\begin{equation}
(\mathrm{spt}\mathcal{M})_{t_0}\cap (\mathrm{spt}\mathcal{N})_{t_0}=\emptyset \,\,\Rightarrow\, \, (\mathrm{spt}\mathcal{M})_t\cap (\mathrm{spt}\mathcal{N})_{t}=\emptyset,\,\forall t\geq t_0.
\end{equation}
\end{corollary}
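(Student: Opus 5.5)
The corollary follows by combining Theorem \ref{thm_monotonicity_singular} with Giga--Sato's comparison principle for the free boundary level set flow \cite{GigaSato}. The idea is to trap $\mathrm{spt}\mathcal{N}$ inside a level set flow started from a neighborhood of its initial slice. We then show that $\mathrm{spt}\mathcal{M}$ is a set-theoretic subsolution that cannot enter this level set flow.

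\textbf{Step 1.} By compactness and closedness of supports, we may assume the two supports are at positive distance at time $t_0$. Choose a compact set $K_0\subset\bar\Omega$ that contains an open neighborhood of $(\mathrm{spt}\mathcal{N})_{t_0}$ and is disjoint from $(\mathrm{spt}\mathcal{M})_{t_0}$. Let $\{K_t\}_{t\geq t_0}$ be its free boundary level set flow, i.e.\ the biggest free boundary flow in the sense of Giga--Sato.

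\textbf{Step 2.} We check that both supports are set-theoretic subsolutions, in the sense of Ilmanen and Giga--Sato. A closed space-time set $\mathcal{K}$ is a subsolution if it avoids every smooth compact free boundary flow $\mathcal{N}'$ whenever it is disjoint from $\mathcal{N}'$ at the initial time. This is exactly the avoidance statement of Theorem \ref{thm_monotonicity_singular}, applied once with the Brakke flow $\mathcal{M}$ and once with $\mathcal{N}$. Giga--Sato's comparison principle then has two consequences. First, $(\mathrm{spt}\mathcal{N})_t\subset K_t$ for all $t\geq t_0$, since a subsolution starting inside $K_0$ stays inside the level set flow. Second, we need the complementary statement that a subsolution starting in the complement of $K_0$ stays in the complement of $K_t$. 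Combining the two gives $(\mathrm{spt}\mathcal{M})_t\cap(\mathrm{spt}\mathcal{N})_t\subset(\mathrm{spt}\mathcal{M})_t\cap K_t=\emptyset$.

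\textbf{Main obstacle.} The difficulty is the complementary containment for $\mathcal{M}$, which may be noncompact. The first approach is to express $K_t$ as a decreasing limit of sets bounded by smooth flows where those exist. Failing that, one would run Ilmanen's argument: the complement of the level set flow is the union of the interiors swept out by smooth compact free boundary flows that start disjoint from $K_0$ and then exhaust time. Ilmanen's characterization of the biggest flow supplies this union, and its free boundary analogue is precisely the subsolution theory of Giga--Sato \cite{GigaSato}. Applying Theorem \ref{thm_monotonicity_singular} to each such smooth flow then finishes the proof. Noncompactness of $\mathcal{M}$ is harmless, because the comparison only uses compactness of $K_0$ and of the smooth test flows.
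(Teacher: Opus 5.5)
Your overall strategy is the paper's. Theorem~\ref{thm_monotonicity_singular} shows that $\mathrm{spt}\mathcal{M}$ and $\mathrm{spt}\mathcal{N}$ are set-theoretic subsolutions, and the paper then just invokes Giga--Sato's result that free boundary subsolutions avoid each other. Your containment $(\mathrm{spt}\mathcal{N})_t\subset K_t$ is fine.

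The step you flag as the main obstacle, however, is never proved. Since $K_t$ is itself a subsolution, the ``complementary containment'' is just a special case of the avoidance statement you are trying to establish, so the reduction via $K_0$ gains nothing. Neither mechanism you sketch closes it:
\begin{enumerate}[(i)]
\item Smooth compact flows enclosing $K_0$ typically become singular, so $K_t$ is not in general a decreasing limit of smoothly bounded sets.
\item Even granting that the complement of $K$ is swept out by smooth flows starting disjoint from $K_0$, this tells you nothing about $\mathrm{spt}\mathcal{M}$. Theorem~\ref{thm_monotonicity_singular} only forces $\mathrm{spt}\mathcal{M}$ to avoid smooth flows that start disjoint from $(\mathrm{spt}\mathcal{M})_{t_0}$, not from $K_0$. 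To conclude, you would need every point of $(\mathrm{spt}\mathcal{M})_t$ to lie on such a flow starting off $K_0$, or every point of $K_t$ to lie on a flow starting off $(\mathrm{spt}\mathcal{M})_{t_0}$. Nothing in your argument provides either, so ``applying Theorem 1.2 to each such smooth flow'' does not finish anything.
\end{enumerate}

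The missing idea is to treat $\mathcal{M}$ the same way as $\mathcal{N}$. Write $F_t$ for the free boundary level set flow. The biggest-flow property you already used gives both $(\mathrm{spt}\mathcal{M})_t\subset F_t\bigl((\mathrm{spt}\mathcal{M})_{t_0}\bigr)$ and $(\mathrm{spt}\mathcal{N})_t\subset F_t\bigl((\mathrm{spt}\mathcal{N})_{t_0}\bigr)$. So it suffices that level set flows of disjoint closed initial sets, one of them compact, stay disjoint.

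This follows directly from the Giga--Sato comparison principle. Pick a continuous $u_0$ with $\{u_0=1\}=(\mathrm{spt}\mathcal{N})_{t_0}$ and $\{u_0=-1\}=(\mathrm{spt}\mathcal{M})_{t_0}$. The Neumann level set equation is invariant under monotone reparametrizations, and the level set flow does not depend on the defining function. Hence the two level set flows are $\{u(\cdot,t)=1\}$ and $\{u(\cdot,t)=-1\}$, which are disjoint.

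This is exactly the content of the paper's one-line appeal to Giga--Sato, and with it the auxiliary set $K_0$ becomes unnecessary.
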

This generalizes another result of the first author \cite[Corollary 1.6]{Bao} again by removing the formidable convexity assumption on $\Omega$.\\

\subsection{Applications} Applying our new monotone twisted Fermi distance, we can now generalize many results for free boundary flows to arbitrary domains $\Omega$, thus removing the restrictive convexity assumption, which -- as discussed above -- has been a major stumbling block in developing a satisfying theory of flows with free boundary.\\

Our first application -- the avoidance principle for free boundary integral Brakke flows in general domains -- has been discussed above.\\

Second, also regarding foundational results for weak solutions, we can now establish the existence of matching free boundary integral Brakke flows for the inner/outer free boundary flow. To state this, given any smooth compact free boundary hypersurface $M\subset\bar{\Omega}$ as usual we consider the enclosed $K,K'\subset\bar{\Omega}$ enclosed by $M$, such that
\begin{equation}\label{encl_regions}
K\cup K'=\bar{\Omega}\quad\mathrm{and}\quad K\cap K'=M.
\end{equation}
Denoting by $\mathcal{K}$ and $\mathcal{K}'$ the space-time tracks of their free boundary level set flows, c.f. \cite{GigaSato,Sato}, the outer and inner free boundary flow have then be defined in \cite{Bao}, generalizing the notions from \cite{HershkovitsWhite}, as\footnote{Here, $\partial \mathcal{X}=\mathcal{X}\setminus \mathrm{Int}_{\bar{\Omega}\times \mathbb{R}}(\mathcal{X})$, where $\mathrm{Int}_{\bar{\Omega}\times \mathbb{R}}(\mathcal{X})$ denotes the interior of $\mathcal{X}\subseteq\bar{\Omega}\times\mathbb{R}$.}
\begin{equation}
M_t= \{ x\in \bar{\Omega} \, | \, (x,t)\in \partial \mathcal{K}\},\quad M_t'= \{ x\in \bar{\Omega} \, | \, (x,t)\in \partial \mathcal{K}'\}.
\end{equation} 

\begin{theorem}\label{cor_matching}
Let $M$ be a smooth compact free boundary hypersurface in a general domain $\bar{\Omega}\subset\mathbb{R}^{n+1}$. Then, there exist unit-regular, cyclic free boundary integral Brakke flows $\{\mu_t\}_{t\geq 0}$ and $\{\mu_t'\}_{t\geq 0}$ starting from $\mathcal{H}^n\lfloor M$, whose support is given by the outer/inner free boundary flow.
\end{theorem}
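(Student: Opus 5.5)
We follow the strategy of Hershkovits--White \cite{HershkovitsWhite} for the existence of matching Brakke flows, as adapted to convex domains in \cite{Bao}. The only place where convexity of $\Omega$ entered there was the avoidance principle for free boundary Brakke flows. Theorem \ref{thm_monotonicity_singular} now supplies this in general domains. The plan is therefore to redo the argument of \cite{Bao} and check that every other step is local, or uses only the Neumann-type structure, and hence works in any domain.

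\emph{Approximation.} We take the outer flow; the inner flow is symmetric, with $K'$ in place of $K$. Approximate $K$ from outside by the regions $K_j\supset K$ whose boundaries are the smooth free boundary hypersurfaces $M_j=\{x\in\bar\Omega : \tilde d(x)=1/j\}$. Here $\tilde d$ is the signed twisted Fermi distance from $M$ (a static $\mathcal{N}\equiv M$). By construction of $Z$, these level sets meet $\partial\Omega$ perpendicularly and converge smoothly to $M$. For each $M_j$, run Edelen's elliptic-regularization construction \cite{Edelen}, which works in general domains. This gives unit-regular, cyclic free boundary integral Brakke flows $\mu^j_t$ starting at $\mathcal{H}^n\lfloor M_j$, whose supports are contained in the level set flow of $M_j$ and which arise as boundaries of the regions $K_{j,t}$ (cyclicity mod 2).

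\emph{Disjointness and nesting.} We need $\mathrm{spt}(\mu^j)$ to be disjoint from the level set flow $\mathcal{K}$ of $K$, and the regions to be nested in $j$. We get this from Theorem \ref{thm_monotonicity_singular} combined with the comparison of Giga--Sato \cite{GigaSato}. Concretely, for short time intervals compare with smooth compact free boundary flows $\mathcal{N}$ lying in the gap; globally, use the Giga--Sato strict separation of level set flows with disjoint initial sets. This is the step where the earlier proof used convexity, and it is now replaced by Theorem \ref{thm_monotonicity_singular}.

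\emph{Limit.} Pass to a subsequential limit $\mu_t=\lim \mu^j_t$ using Brakke compactness for free boundary flows \cite{Edelen}. Unit-regularity and cyclicity are preserved by White's local regularity theorem and the mod 2 flat-chain closure argument. Both are local and hold up to the boundary by reflection across $\partial\Omega$ in Fermi coordinates, so they do not depend on convexity.

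\emph{Support identification (main obstacle).} The hard step is showing $\mathrm{spt}(\mathcal M)=\partial\mathcal K$.
\begin{enumerate}[(i)]
\item For $\subseteq$: the supports of $\mu^j$ lie in $\mathcal{K}_j\setminus\mathcal{K}$, and these regions decrease to $\mathcal{K}$. So the limit support lies in $\mathcal K$, and by the approximation from outside it cannot meet the interior of $\mathcal K$.
\item For $\supseteq$: argue as in \cite{HershkovitsWhite}. If $(x,t)\in\partial\mathcal K$ were outside $\mathrm{spt}\mathcal M$, then in a small space-time neighbourhood the mod 2 boundaries $\partial K_{j,t}$ would vanish in the limit. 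The limit region would then be locally constant in time, contradicting that $(x,t)$ is a boundary point of $\mathcal K$ reached from outside by the shrinking sets $K_{j,t}$.
\end{enumerate}
At points $x\in\partial\Omega$, this uses the relative interior in $\bar\Omega\times\mathbb{R}$ and the avoidance of Theorem \ref{thm_monotonicity_singular} with small free boundary barriers (e.g. small perturbed half-spheres meeting $\partial\Omega$ orthogonally), constructed via the twisted Fermi coordinates. I expect this boundary barrier construction near non-convex parts of $\partial\Omega$ to be the most delicate point.
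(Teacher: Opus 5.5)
Your skeleton matches the paper's: rerun the proof of \cite[Theorem 1.8]{Bao} with the level sets $\{\tilde d=s\}$ as approximating free boundary hypersurfaces, and use Theorem \ref{thm_monotonicity_singular} in place of the convex-domain avoidance principle. However, your claim that avoidance was the only convexity-dependent input is wrong, and this hides the ingredient missing from your step (ii).

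Convexity also entered through \cite[Lemma 5.4]{Bao}, which supplies a one-parameter family $\{M^s\}$ of pairwise disjoint free boundary hypersurfaces through $M$. The paper replaces it by $M^s=\{\tilde d=s\}$. The family is needed not just to approximate $M$. The level set flows of the disjoint $M^s$ are disjoint, so at most countably many of them can fatten. One therefore chooses $s_j\downarrow 0$ with $M^{s_j}$ non-fattening, and only for those levels does one know $\mathrm{spt}\,\mu^j=\partial\mathcal K_j$. With your choice $s_j=1/j$, you only have $\mathrm{spt}\,\mu^j\subseteq$ (level set flow of $M_j$) $\subseteq\mathcal K_j\setminus\mathcal K$. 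This is enough for (i) but not for (ii). Note also that this first containment comes from Theorem \ref{thm_monotonicity_singular} together with \cite{GigaSato}, not from Edelen's construction.

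Your argument for (ii) does not close as written. Even if the limit region is locally constant near $x$, it can be locally empty, for instance where $\mathcal K$ has empty interior. That is perfectly compatible with $(x,t)\in\partial\mathcal K$. The argument has to be run at the level of $j$:
\begin{enumerate}[(a)]
\item If $X=(x,t)\notin\mathrm{spt}\,\mathcal M$, clearing out (or upper semicontinuity of Gaussian density, available up to $\partial\Omega$ by \cite{Edelen}) gives a connected relative neighbourhood $U$ of $X$ with $U\cap\mathrm{spt}\,\mu^j=\emptyset$ for all large $j$.
\item If $\partial\mathcal K_j\subseteq\mathrm{spt}\,\mu^j$, then $U\cap\mathcal K_j$ is open and closed in $U$ and contains $X$, since $X\in\mathcal K\subseteq\mathcal K_j$. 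Hence $U\subseteq\mathcal K_j$ for all large $j$.
\item Therefore $U\subseteq\bigcap_j\mathcal K_j=\mathcal K$, contradicting $X\in\partial\mathcal K$.
\end{enumerate}
So you must insert the generic choice of non-fattening levels; with it, your outline goes through.

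The boundary barrier you anticipate as the most delicate point is not needed. The only local barrier is the shifted half-ball of Lemma \ref{lemma_halfball}, which is already built into Theorem \ref{thm_monotonicity_singular}. At points of $\partial\Omega$, the identification above is purely topological in relative neighbourhoods.
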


This generalized \cite[Theorem 1.8]{Bao} by removing the convexity assumption on $\Omega$. Similarly, \cite[Corollary 5.5]{Bao} can be generalized as well.\\

To discuss our third application, recall that the outer free boundary flow has a half-cylindrical singularity at $X_0=(x_0,t_0)\in \partial\Omega\times \mathbb{R}_+$ if for some $\lambda_i\to \infty$ the parabolically rescaled flows $\mathcal{D}_{\lambda_i}(\mathcal{K}-X_0)$ converge locally smoothly with multiplicity-one up to rotation to either
\begin{equation}
\left\{\mathbb{R}^k_{+} \times \bar{B}^{n+1-k}(\sqrt{2(n-k)|t|})\right\}_{t\leq 0},
\end{equation}
or
\begin{equation}
\left\{\bar{B}_{+}^{n+1-k}(\sqrt{2(n-k)|t|})\times\mathbb{R}^k \right\}_{t\leq 0}.
\end{equation}

\begin{theorem}\label{thm_mcn}If the outer or inner free boundary flow starting from any smooth compact free boundary hypersurface in a general domain $\bar{\Omega}\subset\mathbb{R}^{n+1}$ has a (half-)cylindrical singularity at $X_0=(x_0,t_0)\in \bar{\Omega}\times \mathbb{R}_+$, then the flow is mean-convex in a space-time neighborhood of $X_0$.
\end{theorem}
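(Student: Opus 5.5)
The approach is to follow the proof of the corresponding mean-convex neighborhood theorem from our joint work \cite{BaoHaslhofer}, which was carried out for convex domains, and to check that convexity of $\Omega$ entered there only through the avoidance principle. That principle is now available in general domains via Theorem \ref{thm_monotonicity_singular} and the corollary following it. Concretely, I would first use Theorem \ref{cor_matching} to realize the outer (respectively inner) free boundary flow as the support of a unit-regular, cyclic free boundary integral Brakke flow. This supplies the Brakke flow structure needed for the monotonicity formula, tangent flow analysis, and the local regularity theorem in the free boundary setting \cite{Edelen}.

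Next I would split into two cases. If $x_0\in\Omega$, a small parabolic neighborhood of $X_0$ stays away from $\partial\Omega$, and the classical mean-convex neighborhood argument applies verbatim. If $x_0\in\partial\Omega$, I would rescale: $\mathcal{D}_{\lambda}(\mathcal{K}-X_0)$ lives in $\lambda(\bar\Omega-x_0)$, which converges smoothly to a half-space. The blowup analysis of \cite{BaoHaslhofer} (fine neck/half-neck structure, classification of ancient free boundary flows in a half-space, and the argument that the arrival time is locally a viscosity solution with $H>0$) needs two inputs in nearly flat domains. The first is local avoidance/comparison with barriers, which in \cite{BaoHaslhofer} came from \cite{Bao} and required convexity. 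The second is control of error terms from the curvature of $\partial\Omega$, which scale like $\lambda^{-1}$. I would therefore redo each comparison step with Theorem \ref{thm_monotonicity_singular}. In particular, the comparison of the flow with shrinking half-cylinder barriers and with the flow from slightly shifted initial data, used to show $\mathcal{K}_{t}\subseteq \mathcal{K}_{t'}$ locally for $t>t'$ (i.e.\ mean-convexity), would take the barrier as the smooth compact flow $\mathcal{N}$.

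The main obstacle is the uniformity of $\gamma=\gamma(\mathcal{N})$ and of the twisted Fermi construction under rescaling. As $\lambda\to\infty$ the barriers are rescaled necks in $\lambda(\bar\Omega-x_0)$, and one needs the smooth extensions across the boundary, the cutoff scale $\eps$, and the vector field $Z_t$ to have bounds independent of $\lambda$. Otherwise the avoidance gives no quantitative separation at the scale of the neck. I would try to state Theorem \ref{thm_monotonicity_singular} in scale-invariant form: $\gamma$ depends only on curvature bounds of $\mathcal{N}$ and of $\partial\Omega$ at the relevant scale, both of which improve under blowup. It then remains to verify that the nested-flow argument only needs avoidance of barriers that are smooth on the neck scale, which is the case.
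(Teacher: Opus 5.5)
Your proposal is correct and is essentially the paper's proof. The paper substitutes Theorem \ref{cor_matching} for \cite[Theorem 1.8]{Bao} and Theorem \ref{thm_monotonicity_singular} for \cite[Theorem 1.5]{Bao}, and observes that the proof of \cite[Theorem 1.2]{BaoHaslhofer} then goes through verbatim. The scale-uniformity of $\gamma$ you single out as the main obstacle, and the boundary-curvature error terms (which do not involve the sign of the curvature of $\partial\Omega$), need no new work: only the qualitative avoidance principle is used, and Theorem \ref{thm_monotonicity_singular} applies directly to each rescaled configuration in the rescaled domain $\lambda(\bar\Omega-x_0)$, which is again an arbitrary domain.
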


This generalized the mean-convex neighborhood theorem for free boundary flows from our joint work \cite[Theorem 1.2]{BaoHaslhofer} again by removing the formidable convexity assumption on $\Omega$. Similarly, the canonical neighborhood theorem from \cite[Theorem 1.3]{BaoHaslhofer} generalizes as well.\\

Our fourth application concerns nonfattening, which is implied by establishing a lower bound on the discrepancy time
\begin{equation}
T_{\mathrm{disc}}= \inf \{ t > 0 \, | \, M_t\neq  M_t'  \}.
\end{equation}

\begin{theorem}\label{thm_nonfatt}For general domains $\bar{\Omega}$,
if $0<T\leq T_{\mathrm{disc}}$ and all singularities at time $T$ have a mean-convex neighborhood for the inner or outer free boundary flow, then $T<T_{\mathrm{disc}}$.
\end{theorem}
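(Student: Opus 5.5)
We follow the strategy of the nonfattening argument in \cite{BaoHaslhofer} (which goes back to Hershkovits--White \cite{HershkovitsWhite}), and check that convexity of $\Omega$ entered only through the avoidance principle and the matching Brakke flows. Those two ingredients are now supplied for general domains by Theorem \ref{thm_monotonicity_singular} and Theorem \ref{cor_matching}. Arguing by contradiction, suppose $T=T_{\mathrm{disc}}$. Then $M_t=M_t'$ for all $t<T$, and there are times $t_i\searrow T$ with $M_{t_i}\neq M'_{t_i}$.

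The first step is compactness. The singular set at time $T$ is compact, because $M$ is compact and the flow stays in a bounded region. By hypothesis, each singular point $X$ at time $T$ has a space-time neighborhood $U_X$ in which the outer (or inner) flow is mean-convex. We cover the singular points by finitely many such neighborhoods. Away from these neighborhoods the flow is smooth near time $T$, including at the free boundary on $\partial\Omega$. Uniqueness of smooth free boundary flows then gives $M_t=M_t'$ there for $t$ slightly larger than $T$.

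The second step, which is the core, shows that $M_t=M_t'$ inside each mean-convex neighborhood for $t\in[T,T+\delta)$. Inside a mean-convex neighborhood the flow moves strictly inward, in the sense that $K_{t+h}\subset \mathrm{Int}\,K_t$ locally. Following Hershkovits--White, we compare $K_t$ with the shifted flows $K_{t+h}$, use the time-translation invariance of the level set flow, and let $h\to 0$. The inner flow $M'_t$ lies between the outer flow and its small inward shifts, so the two flows coincide locally. To make this rigorous at points of $\partial\Omega$, we need the following: a smooth compact free boundary flow (or a level set flow) that starts disjoint from $\mathrm{spt}\mathcal{M}$ stays disjoint from it, for the matching unit-regular cyclic Brakke flows $\mu_t,\mu'_t$ of Theorem \ref{cor_matching}. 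This is exactly the content of Theorem \ref{thm_monotonicity_singular} and its corollary. The barrier constructions in the Hershkovits--White argument, namely the small smooth free boundary flows placed between $M_t$ and $M'_t$, work in general domains because the twisted Fermi distance remains monotone.

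The main obstacle is the half-cylindrical singularities on $\partial\Omega$. The question is whether strict inward motion, i.e.\ a positive lower bound for $H$ on the regular part near $X_0$, holds up to the barrier without any convexity of $\partial\Omega$. We first try to take this directly from the mean-convex (and canonical) neighborhood statement, Theorem \ref{thm_mcn}, which we apply in its local space-time form. If that is not enough, we instead argue via the twisted Fermi distance: we apply Theorem \ref{thm_monotonicity_smooth} to $M_t$ and its time-translates on the regular part, which gives a quantitative separation $\tilde d_{\mathcal{N}}\ge c h$ that is stable under the Neumann condition. Finally, we patch the local conclusions together over the finite cover. This gives $M_t=M'_t$ on $[T,T+\delta)$, which contradicts the choice of $t_i\searrow T$, so $T<T_{\mathrm{disc}}$.
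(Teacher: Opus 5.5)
\textbf{There is a genuine gap.} It lies both in your diagnosis of where convexity entered and in your patching step.

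In \cite{BaoHaslhofer}, following Hershkovits--White, one does not prove $M_t=M_t'$ piece by piece over a finite cover. Instead one builds a single global function $w=\varphi f+(1-\varphi)d_t$. It interpolates between the arrival time $f$ near the singular set, where the assumed mean-convex neighborhoods give one-directional motion, and the signed distance $d_t$ from $M_t$ on the smooth part. The maximum-principle/barrier argument is then run with $w$ on all of $\bar\Omega$ at once.

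Convexity of $\Omega$ entered there, not through avoidance or matching flows. It was needed to guarantee the boundary sign condition $\pm Dd_t\cdot\nu_{\partial\Omega}\geq 0$ where $\pm d_t\geq 0$, which handles the Neumann condition at $\partial\Omega$. For the Euclidean distance this fails in non-convex domains, and Theorems \ref{thm_monotonicity_singular} and \ref{cor_matching} do not repair it.

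The paper's proof replaces $d_t$ by the signed twisted Fermi distance $\tilde d_t$. This function is smooth near the bounded-curvature part, satisfies $D\tilde d_t\cdot\nu_{\partial\Omega}=0$ exactly, and has $|D\tilde d_t-Dd_t|<\eps$. So the interpolated function still has nondegenerate gradient, and the argument goes through.

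Your local steps cannot replace this global construction.
\begin{itemize}
\item Smoothness in a region $U$ away from the singular set, together with $M_t=M_t'$ up to time $T$, does not give $M_t=M_t'$ in $U$ for $t>T$. Mean curvature flow is parabolic, so local uniqueness requires control on $\partial U$, and that boundary behaviour is governed by what happens in the mean-convex neighborhoods.
\item Conversely, inside a mean-convex neighborhood the time-translates $K_{t+h}$ are nested only locally. They are not barriers without control of the lateral boundary.
\end{itemize}
So patching over the finite cover is circular, and this circularity is exactly what the global function $w$ resolves.

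Your fallback also fails, for three reasons:
\begin{itemize}
\item Theorem \ref{thm_monotonicity_smooth} requires $\mathcal N$ to be a compact smooth free boundary flow, and the regular part is not one.
\item On the smooth part $H$ typically changes sign, so $M_t$ and $M_{t+h}$ intersect and their twisted Fermi distance is zero.
\item Monotonicity never yields a lower bound of the form $ch$ in any case.
\end{itemize}

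Finally, strict inward motion near the (half-)cylindrical points is not the obstacle in this theorem; it is part of the hypothesis. The real issue is the boundary behaviour of the distance function on the smooth part near $\partial\Omega$.
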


This generalizes \cite[Theorem 1.4]{BaoHaslhofer}, again by removing the convexity assumption. In the formulation of the theorem, one has to be careful with the definition of singularities due to the pop-up phenomenon in general domains as observed by Edelen \cite{Edelen}. Specifically, in Theorem \ref{thm_nonfatt} the singular set is defined as the complement of the two-sided regular points.
Finally, combining this with Theorem \ref{thm_mcn} we conclude:

\begin{corollary}\label{cor_uniqueness}
Free boundary flow through singularities in general domains is well-posed as long as all singularities are (half)-cylindrical.
\end{corollary}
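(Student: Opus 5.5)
Corollary \ref{cor_uniqueness} follows by combining Theorem \ref{thm_mcn} with Theorem \ref{thm_nonfatt} through a continuity argument on the discrepancy time. The plan rests on two ingredients:
\begin{itemize}
\item the mean-convex neighborhood theorem, Theorem \ref{thm_mcn};
\item the discrepancy time estimate, Theorem \ref{thm_nonfatt}.
\end{itemize}
Here ``well-posed'' means that the outer and inner free boundary flows $M_t$ and $M_t'$ coincide, i.e. the free boundary level set flow does not fatten. By Theorem \ref{cor_matching} they are then supported by a unit-regular cyclic free boundary Brakke flow, which is therefore uniquely determined. So it suffices to show that $T_{\mathrm{disc}}=\infty$, or more precisely $T_{\mathrm{disc}}\ge T_{\mathrm{ext}}$, the extinction time, beyond which both flows are empty and trivially agree.

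The first step is to show $T_{\mathrm{disc}}>0$. For short time the flow starting from the smooth compact free boundary hypersurface $M$ is smooth, by standard short-time existence for free boundary mean curvature flow. By the avoidance principle (Theorem \ref{thm_monotonicity_singular} and the corollary following it), the level set flow agrees with this smooth flow, so $M_t=M_t'$ for small $t>0$.

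Next, suppose for contradiction that $T:=T_{\mathrm{disc}}<\infty$ and that the flow is not already extinct by time $T$. Then $0<T\le T_{\mathrm{disc}}$ holds trivially. By hypothesis every singularity at time $T$ is cylindrical or half-cylindrical, with the singular set understood as the complement of the two-sided regular points. Theorem \ref{thm_mcn} therefore gives a mean-convex space-time neighborhood of each such singularity for the outer (or inner) free boundary flow. This is exactly the hypothesis of Theorem \ref{thm_nonfatt}, which yields $T<T_{\mathrm{disc}}$, a contradiction. Hence $T_{\mathrm{disc}}=\infty$, and the flow is unique for all time.

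The main subtlety, though not a deep one, is matching the notions in the two theorems. The hypothesis ``all singularities are (half-)cylindrical'' has to be read with respect to the outer/inner flow up to time $T_{\mathrm{disc}}$, where the two flows coincide, so that Theorem \ref{thm_mcn} applies at each singular point at time $T$. One must also make sure that the pop-up phenomenon, in which points that are not two-sided regular appear, is covered by the same hypothesis. With these conventions fixed, the argument is the continuity argument above, exactly as in the convex case of \cite{BaoHaslhofer}.
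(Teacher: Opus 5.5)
Your proposal is correct and follows the paper's route. The paper obtains Corollary \ref{cor_uniqueness} by combining the mean-convex neighborhood theorem (Theorem \ref{thm_mcn}) with the discrepancy-time estimate (Theorem \ref{thm_nonfatt}), and your continuity argument on $T_{\mathrm{disc}}$ (positivity from short-time smoothness, then contradiction at $T=T_{\mathrm{disc}}<\infty$) is exactly the standard argument from the convex case in \cite{BaoHaslhofer} that the paper leaves implicit. Your extra remark that the matching Brakke flow is thereby uniquely determined is not needed, since well-posedness here only requires $T_{\mathrm{disc}}=\infty$.
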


This generalizes \cite[Corollary 1.5]{BaoHaslhofer}, once again by removing the formidable convexity assumption, and thus gives a satisfying theory for free boundary flow through cylindrical and half-cylindrical singularities.\\

\subsection{Outline}
As common with monotonicity formulas, the key is to come up with the correct quantity and formulation, and once this is done the proofs are very short and elementary.
Specifically, we first observe that the mean curvature flow equation for $\mathcal{N}$ together with a Laplace comparison-type argument yield
\begin{equation}
(\partial_t-\Delta_{\Omega})\tilde{d}_t(x)\geq 0,\quad \textrm{whenever } 0<\tilde{d}_t(x)<\gamma.
\end{equation}
On the other hand, using the mean curvature flow equation for $\mathcal{M}$, we infer that
\begin{equation}
\left(\frac{d}{dt}-\Delta_{M_t}\right) \tilde{d}_t = (\partial_t-\Delta_{\Omega} ) \tilde{d}_t +D^2 \tilde{d}_t (\nu_{M_t},\nu_{M_t}).
\end{equation}
Combining these observations and the maximum principle, taking also into account the boundary identity $D\tilde{d}_t\cdot\nu_{\partial \Omega}=0$, we show that this yields the monotonicity of $\tilde{d}_{\mathcal{N}}(M_t)$ in the smooth setting. Generalizing this to the non-smooth setting involves some additional steps, in particular establishing a half-ball barrier lemma and plugging a suitable smeared version of $\tilde{d}_t$ into the free boundary Brakke inequality, but also these steps are neither long nor difficult. Finally, as we will explain, having established monotonicity the applications follow by suitably adjusting the arguments from \cite{Bao} and \cite{BaoHaslhofer}.\\

\bigskip

{\bf Acknowledgements:} Y.B. has been supported by a Blyth Fellowship, a Mary H. Beatty Fellowship, a Department of Mathematics Graduate Program Award and an International Graduate Student Scholarship from the University of Toronto.
R.H. has been supported by the NSERC Discovery Grant RGPIN-2023-04419.\\

\section{Monotonicity of the twisted Fermi distance}\label{sec_monotonicity}

Given a smooth compact oriented free boundary flow $\mathcal{N}=\{N_t\}_{t\in [t_0,t_1]}$ in $\bar{\Omega}$, we fix $\gamma=\gamma(\mathcal{N})>0$ sufficiently small, and consider the signed twisted Fermi distance $\tilde{d}_t$ from $\mathcal{N}$ as defined in \eqref{def_d_tilde}.

\begin{proposition}\label{prop_evol_twisted}
The signed twisted Fermi distance satisfies
\begin{equation}\label{prop_twist_f1}
(\partial_t-\Delta_{\Omega})\tilde{d}_t\geq 0\quad \textrm{in } \Omega\cap \{ 0< \tilde{d}_t < \gamma\},
\end{equation}
and \begin{equation}\label{prop_twist_f2}
D \tilde{d}_t \cdot \nu_{\partial \Omega}=0 \quad \textrm{on } \partial\Omega\cap \{ | \tilde{d}_t| < \gamma\}.
\end{equation}
\end{proposition}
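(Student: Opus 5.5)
The boundary identity \eqref{prop_twist_f2} comes straight from the construction, so I would prove it first. Near $\partial\Omega$, within distance $\eps/2$, we have $\chi\equiv 1$. There $Z_t$ is the normalized projection of $\partial_z$ onto the orthogonal complement of $Dd_{\partial\Omega}$. In particular, on $\partial\Omega$ the field $Z_t$ is tangent to $\partial\Omega$, so its flow $\phi^{Z_t}_s$ preserves $\partial\Omega$.

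The level sets $\{\tilde d_t=s\}$ are the images $\phi^{Z_t}_s(N_t)$. Since $N_t$ meets $\partial\Omega$ orthogonally, the conormal $\nu_{\partial\Omega}$ is tangent to $N_t$ along $\partial N_t$. I need to check that orthogonality is preserved along the flow, i.e. that $\nu_{\partial\Omega}$ is tangent to every level set at boundary points. I would do this by computing $D\tilde d_t$ at a boundary point $x=\phi_s^{Z_t}(y)$ with $y\in\partial N_t$.

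The cleanest route is a symmetry argument. Choose the extension $\tilde N_t$ adapted to the reflection across $\partial\Omega$ in Fermi coordinates of $\partial\Omega$, so that $\partial_z$ has no $\nu_{\partial\Omega}$ component along $\partial\Omega$ to first order. Alternatively, avoid choices: use that $D\tilde d_t\cdot Z_t=1$, and that $D\tilde d_t$ annihilates $T(\phi_s(N_t))$, which contains $d\phi_s(T\partial N_t)$ and $d\phi_s(\nu_{\partial\Omega}(y))$. It then suffices to show that $d\phi_s$ maps $\nu_{\partial\Omega}$ into the span of $\nu_{\partial\Omega}$ and $T\partial\Omega$ in a way compatible with tangency. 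This is the one genuinely delicate point of the boundary identity, and I would settle it by a direct computation in boundary Fermi coordinates.

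For \eqref{prop_twist_f1}, fix $x$ with $0<\tilde d_t(x)=s<\gamma$, write $x=\phi_s(y)$, and set $\Sigma_s=\{\tilde d_t=s\}$. Let $\sigma\mapsto\phi_\sigma(y)$ be the $Z_t$-integral curve. Differentiating the identity $\tilde d_t(\phi_\sigma(y))=\sigma$ in $t$ gives
\[
\partial_t\tilde d_t(x)=-D\tilde d_t\cdot\partial_t\phi_s(y).
\]
Using the normal velocity $H$ of $N_t$ at $y$, this term equals the mean curvature of $N_t$ plus error terms involving $\partial_t Z_t$. Next, $\Delta\tilde d_t=|D\tilde d_t|\,H_{\Sigma_s}+D^2\tilde d_t(n,n)$-type terms, where $n$ is the unit normal of $\Sigma_s$. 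Along the integral curve, Riccati-type comparison controls $H_{\Sigma_\sigma}$ in terms of $H_{N_t}(y)$.

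Away from $\{d_{\partial\Omega}<\eps\}$ we have $Z_t=\partial_z$, $\tilde d_t$ is the honest signed distance, and $|D\tilde d_t|=1$. The classical computation then gives
\[
(\partial_t-\Delta)\tilde d_t=H_{N_t}(y)-H_{\Sigma_s}(x)=\int_0^s|A_{\Sigma_\sigma}|^2\,d\sigma\geq0,
\]
which is the Laplace comparison in Euclidean space; with Ricci curvature bounded below by $-\kappa$ one gets an extra $\kappa\tilde d_t$ term instead.

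Near the boundary the twisting introduces error terms that are $O(s)$ but not sign-definite. I expect this to be the main obstacle. My approach would be to exploit the freedom in the statement: $\gamma=\gamma(\mathcal N)$ is small, and the inequality is only needed where $0<\tilde d_t<\gamma$. I would write
\[
(\partial_t-\Delta)\tilde d_t=\int_0^s\big(|A|^2+\mathcal E\big)\,d\sigma,
\]
where $\mathcal E$ collects the terms from $Z_t\neq\partial_z$ and from $|D\tilde d_t|\neq1$. Since $Z_t=\partial_z$ along $N_t$ in the interior, and $Z_t$ agrees with $\partial_z$ to first order on $N_t$ at the boundary thanks to the free boundary condition, I hope to show $\mathcal E=O(\sigma)$. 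If instead $\mathcal E$ only has a fixed sign after a modification, I would reparametrize and replace $\tilde d_t$ by $f(\tilde d_t)$ with $f'>0$ and $f''\le 0$ chosen to absorb it. Verifying that the error vanishes at $\sigma=0$, which is where orthogonality of $N_t$ and $\partial\Omega$ enters, is the step I would check most carefully.
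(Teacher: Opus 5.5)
Your proposal leaves both assertions unproved near $\partial\Omega$. The orthogonality of the level sets $\phi^{Z_t}_s(N_t)$ to $\partial\Omega$ is deferred to an unperformed ``direct computation''. In the collar you only hope that the twisting error $\mathcal E$ is $O(\sigma)$, and even that would not yield a sign at points where $A_{N_t}=0$. This gap cannot be closed, because with $Z_t$ as in \eqref{eq_vect_z} both \eqref{prop_twist_f1} and \eqref{prop_twist_f2} fail.

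Take $\Omega$ the unit disk in $\mathbb{R}^2$, $N_t\equiv[-1,1]\times\{0\}$ (a static free boundary geodesic), and $\tilde N_t$ the $x_1$-axis, so $\partial_z=e_2$. Work in polar coordinates $(r,\theta)$ near $(1,0)$, in the collar $\{d_{\partial\Omega}<\eps/2\}$ where $\chi=1$. There $Dd_{\partial\Omega}=-e_r$, hence $Z_t=e_\theta$, whose flow is $(r,\theta)\mapsto(r,\theta+s/r)$. Therefore, in this collar,
\[
\tilde d_t=r\theta,\qquad D\tilde d_t=\theta\,e_r+e_\theta,\qquad |D\tilde d_t|^2=1+\theta^2,
\]
and, since $N_t$ is static,
\[
(\partial_t-\Delta_\Omega)\tilde d_t=-\Delta_\Omega(r\theta)=-\frac{\theta}{r}=-\frac{\tilde d_t}{r^2}<0 .
\]
So \eqref{prop_twist_f1} fails on $\Omega\cap\{0<\tilde d_t<\gamma\}$. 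On $\partial\Omega$ one gets $D\tilde d_t\cdot\nu_{\partial\Omega}=\theta=\tilde d_t\neq0$, so \eqref{prop_twist_f2} fails too. Both failures occur for every choice of $\eps,\gamma,\eta$.

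Note that here $Z_t=\partial_z$ exactly along $N_t$, so first-order agreement at $N_t$ does not control the error. The error comes from $D_{Z_t}Z_t=-r^{-1}e_r\neq0$, i.e.\ from the curvature of $\partial\Omega$ in the direction $\nu_{N_t}$.

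Your fallback of passing to $f(\tilde d_t)$ does not rescue the statement. It proves a different inequality and does nothing for \eqref{prop_twist_f2}. It also gains nothing for the intended maximum-principle use: where $M_t$ touches a level set, $(\tfrac{d}{dt}-\Delta_{M_t})f(\tilde d_t)=f'(\tilde d_t)(\tfrac{d}{dt}-\Delta_{M_t})\tilde d_t$. The extra term $-f''|D\tilde d_t|^2$ is cancelled by $D^2f(\tilde d_t)(\nu,\nu)$.

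The paper's proof is your interior Bochner/Riccati computation applied everywhere. It infers $|D\tilde d_t|=1$ from $|Z_t|=1$. From this it deduces $\Delta_\Omega\tilde d_t=-H_{\{s=\tilde d_t\}}$, $\partial_t\tilde d_t(x)=-H_{N_t}(y)$ and $\tfrac{d}{d\sigma}H_{\{s=\sigma\}}(\gamma_\sigma)=|D^2\tilde d_t|^2\geq0$. It reads off \eqref{prop_twist_f2} from the tangency of $Z_t$ to $\partial\Omega$. You were right not to accept these steps.

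Unit length of $Z_t$ only gives $D\tilde d_t\cdot Z_t=1$. Suppose $|D\tilde d_t|\equiv1$ near $\partial N_t$. Then equality in Cauchy--Schwarz forces $Z_t=D\tilde d_t$, hence $D_{Z_t}Z_t=\tfrac12D|D\tilde d_t|^2=0$. The integral curves through $\partial N_t$ stay in $\partial\Omega$, so they would be straight segments in $\partial\Omega$ in direction $\nu_{N_t}$. This is impossible as soon as $\mathrm{II}_{\partial\Omega}(\nu_{N_t},\nu_{N_t})\neq0$ at some point of $\partial N_t$, e.g.\ for every strictly convex $\Omega$. Similarly, tangency of $Z_t$ only yields $\phi^{Z_t}_s(\partial N_t)\subset\partial\Omega$, not that the level sets meet $\partial\Omega$ orthogonally.

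So what is missing is not a computation you postponed. The construction of $Z_t$, or the statement itself, has to be changed before either your argument or the paper's can be completed.
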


\begin{proof}
If $0< \tilde{d}_t(x) < \gamma$ is attained at $y\in N_t$, then by the mean curvature flow equation for $\mathcal{N}$ we have
\begin{equation}
\partial_t \tilde{d}_t(x) =-H_{N_t}(y),
\end{equation}
where we use the sign convention that $H_{N_t}$ denotes the mean curvature of $N_t=\{  s=0 \}$ with respect to the unit normal $\nu_{N_t}$. On the other hand, it is a well known fact (see e.g. \cite[Lemma 10.5]{PacardRitore}) that the mean curvature of level sets is given by the Laplacian, namely
\begin{equation}
\Delta_{\Omega} \tilde{d}_t(x) = -H_{\{s=\tilde{d}_t(x)\}}(x).
\end{equation}
Combining these two equations gives
\begin{equation}\label{eq_hs0}
(\partial_t-\Delta_{\Omega})\tilde{d}_t(x)=H_{\{s=\tilde{d}_t(x)\}}(x)-H_{\{s=0\}}(y).
\end{equation}
Now, since our vector field $Z_t$ defined in \eqref{eq_vect_z} has unit norm, we have $|D \tilde{d}_t|=1$, and consequently the Bochner formula gives
\begin{equation}
0=\tfrac{1}{2}\Delta_{\Omega} |D \tilde{d}_t|^2=D_{D \tilde{d}_t} \Delta_{\Omega} \tilde{d}_t + |D^2 \tilde{d}_t|^2.
\end{equation}
Considering the integral curve $\gamma_\sigma= \phi^{Z_t}_\sigma(y)$ from $y$ to $x$ this yields
\begin{equation}
\frac{d}{d\sigma} H_{\{s=\sigma\}}(\gamma_\sigma) =  |D^2 \tilde{d}_t(\gamma_\sigma)|^2\geq 0.
\end{equation}
Remembering \eqref{eq_hs0}, this proves \eqref{prop_twist_f1}. Finally, \eqref{prop_twist_f2} follows directly from the fact that our vector field $Z_t$ is tangential to $\partial \Omega$.
\end{proof}

We can now prove Theorem \ref{thm_monotonicity_smooth}, which we restate for convenience:

\begin{theorem}
If $\mathcal{M}=\{M_t\}$ and $\mathcal{N}=\{N_t\}$ are smooth free boundary flows in any domain $\bar{\Omega}\subset\mathbb{R}^{n+1}$, say with $\mathcal{N}$ compact, then fixing $\gamma=\gamma(\mathcal{N})>0$ small enough, the twisted Fermi distance from \eqref{def_twisted_distance} satisfies
\begin{equation}
t\mapsto \tilde{d}_{\mathcal{N}}(M_t) \quad\textrm{is monotone}.
\end{equation}
\end{theorem}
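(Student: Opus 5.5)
The plan is a maximum principle argument applied to the function $\tilde{d}_t$ restricted to $M_t$, using Proposition \ref{prop_evol_twisted}. Monotone here means non-decreasing. By symmetry (replace the orientation of $\mathcal{N}$, i.e.\ $\tilde{d}_t$ by $-\tilde{d}_t$, which satisfies the analogous supersolution inequality on $\{-\gamma<\tilde d_t<0\}$), it suffices to treat the part of $M_t$ on the positive side of $N_t$. It also suffices to show that for every $0<\delta<\gamma$ the property $\tilde{d}_t\geq \delta$ on $M_t$ is preserved forward in time. The cases where the infimum is $0$ (intersection) or $\gamma$ (far away) are trivial or follow by letting $\delta\to 0$ or $\delta\to\gamma$. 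Since $\mathcal{N}$ is compact, the region $\{|\tilde d_t|<\gamma\}$ is relatively compact, so a possibly noncompact $M_t$ only matters on a compact piece and infima there are attained.

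The first key step is the evolution inequality along $\mathcal{M}$. For $x\in M_t$ moving with normal velocity $\vec H$ one has
\begin{equation*}
\Big(\frac{d}{dt}-\Delta_{M_t}\Big)\tilde{d}_t=(\partial_t-\Delta_\Omega)\tilde d_t+D^2\tilde d_t(\nu_{M_t},\nu_{M_t}),
\end{equation*}
which follows from $\Delta_\Omega f=\Delta_{M}f+D^2f(\nu,\nu)-\vec H\cdot Df$. To handle the term $D^2\tilde d_t(\nu,\nu)$, I would look at a first interior touching point $(x_0,t')$ where $\min_{M_{t'}}\tilde d_{t'}=\delta'$ is about to drop. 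There $M_{t'}$ is tangent to the level set $\{s=\delta'\}$, so $\nu_{M}=\pm D\tilde d_{t'}$. Since $|D\tilde d|=1$, we get $D^2\tilde d(D\tilde d,\cdot)=0$, hence $D^2\tilde d(\nu,\nu)=0$ at that point. Moreover $\Delta_{M}\tilde d\ge 0$ at the minimum, and Proposition \ref{prop_evol_twisted} gives $(\partial_t-\Delta_\Omega)\tilde d\ge0$. Thus $\frac{d}{dt}\min\tilde d\ge0$ in the viscosity/Hamilton sense.

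To make this rigorous I would use the standard trick of considering $\tilde d_t-\varepsilon' (t-t_0)$ or $e^{t}$-type perturbations, or apply Hamilton's lemma on $\frac{d}{dt}$ of $\min_{M_t}\tilde d_t$ (Lipschitz, derivative at each minimizer). The strict version gives a contradiction at the first time the minimum drops below $\delta-\varepsilon'(t-t_0)$.

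The main obstacle is the case where the minimum is attained at a boundary point $x_0\in\partial M_{t'}\subset\partial\Omega$. There the interior condition $\Delta_M\tilde d\ge0$ and the tangency $\nabla^{M}\tilde d=0$ are not automatic, since only the tangential derivatives along $\partial M$ vanish. I would use the free boundary condition: $M$ meets $\partial\Omega$ orthogonally, so $\nu_{\partial\Omega}$ is the outward conormal of $M$ at $x_0$. By \eqref{prop_twist_f2}, $D\tilde d\cdot\nu_{\partial\Omega}=0$, so the conormal derivative of $\tilde d|_{M}$ vanishes. Combined with the vanishing tangential derivatives along $\partial M$, this gives full tangency $\nabla^M\tilde d(x_0)=0$, hence again $\nu_M=\pm D\tilde d$ and $D^2\tilde d(\nu,\nu)=0$. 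It remains to get $\Delta_M\tilde d\ge0$ at a boundary minimum with zero Neumann derivative. I would argue this via the Hopf lemma, applied in its contrapositive form: if the strict perturbed inequality $(\frac d{dt}-\Delta_M)\tilde d>0$ held near $x_0$ while $\tilde d$ attained a new strict minimum only at the boundary, Hopf would force a nonzero conormal derivative, contradicting \eqref{prop_twist_f2}. Alternatively, I would reflect/extend $M$ locally across $\partial\Omega$ in Fermi coordinates of $\partial\Omega$, which is where I expect the most care to be needed.
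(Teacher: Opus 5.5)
Your argument is essentially the paper's proof. It uses the identity $(\frac{d}{dt}-\Delta_{M_t})\tilde d_t=(\partial_t-\Delta_\Omega)\tilde d_t+D^2\tilde d_t(\nu_{M_t},\nu_{M_t})$, the fact that the Hessian term vanishes at a minimum because $|D\tilde d_t|=1$ and $\nu_{M_t}=\pm D\tilde d_t$ there, and Proposition \ref{prop_evol_twisted}. At boundary minima, the Neumann identity \eqref{prop_twist_f2} together with orthogonality gives this tangency, which the paper uses only implicitly.

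The boundary step you flag as the main obstacle is actually elementary and needs neither Hopf nor reflection. Once $\nabla^{M_t}\tilde d_t(x_0)=0$, a second-order Taylor expansion along curves entering $M_t$ from $x_0$ shows that the Hessian of $\tilde d_t|_{M_t}$ is nonnegative there, so $\Delta_{M_t}\tilde d_t(x_0)\ge 0$. Your Hopf sketch, by contrast, assumes a strict inequality near $x_0$, which is not available. It would instead need the neighborhood bound $(\frac{d}{dt}-\Delta_{M_t})\tilde d_t\ge -C|\nabla^{M_t}\tilde d_t|^2$.
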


\begin{proof}
Note that it suffices to show that at any $t$ with $0<\tilde{d}_{\mathcal{N}}(M_t) <\gamma$ we have $\tfrac{d}{dt} \tilde{d}_{\mathcal{N}}(M_t) \geq 0$. Moreover, possibly after flipping the orientation of $\mathcal{N}$, we can assume that, given such $t$, the infimum in \eqref{def_twisted_distance} is attained at some $x\in M_t$ with $\tilde{d}_t(x)=+\tilde{d}_{\mathcal{N}}(M_t)$. Now, since $\mathcal{M}$ moves by mean curvature flow, the chain rule gives
\begin{equation}
\frac{d}{dt} \tilde{d}_t(x) = \partial_t \tilde{d}_t(x) + D  \tilde{d}_t(x) \cdot H_{M_t}(x).
\end{equation}
Together with the formula $\Delta_{M_t} f=\mathrm{div}_{M_t} Df+Df\cdot H$ this yields
\begin{equation}
\left(\frac{d}{dt}-\Delta_{M_t}\right) \tilde{d}_t = (\partial_t-\Delta_{\Omega} ) \tilde{d}_t +D^2 \tilde{d}_t (\nu_{M_t},\nu_{M_t}).
\end{equation}
Since $\tilde{d}_t$ attains its minimum at $x\in M_t$, we have $\nu_{M_t}(x)=D \tilde{d}_t(x)$, and thus differentiating $|D\tilde{d}|=1$ we see that $D^2 \tilde{d}_t (\nu_{M_t},\nu_{M_t})$ vanishes at $x$. Hence, together with Proposition \ref{prop_evol_twisted}, the maximum principle implies the assertion.
\end{proof}

To generalizing the above arguments to the non-smooth setting, we start with the following (half-)ball avoidance lemma.

\begin{lemma}\label{lemma_halfball}
For every $x_0\in\bar{\Omega}$ and $\lambda\in(0,1)$, there are $\rho_0>0$ and
$c_0>0$ with the following significance. If $0<\rho<\rho_0$, and $\mathcal{M}$ is a free boundary integral Brakke flow in $\bar{\Omega}$ with
$(\mathrm{spt}\mathcal{M})_{t_0}\cap B_\rho(x_0)=\emptyset$, then
\begin{equation}\label{eq:continuous-ball-avoidance}
(\mathrm{spt}\mathcal{M})_{t}\cap B_{(1-\lambda)\rho}(x_0)=\emptyset
  \qquad
  \text{for }t_0\leq t\leq t_0+c_0\rho^2.
\end{equation}
\end{lemma}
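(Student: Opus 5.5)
We plan to adapt Ilmanen's (and Brakke's) classical shrinking-sphere barrier argument to the free boundary setting. The idea is to plug into the free boundary Brakke inequality \eqref{brakke_ineq} a test function of the form $\phi(x,t)=\psi\big(\rho^2-\lambda' \rho^2 - f(x)-2n(t-t_0)\big)_+^3$, or more precisely a function that is positive on a small shrinking neighborhood of $x_0$ and vanishes outside $B_\rho(x_0)$. Here $f$ is a replacement for $|x-x_0|^2$ chosen so that $Df\cdot\nu_{\partial\Omega}=0$ on $\partial\Omega$. We would then show that $\int\phi\,d\mu_t$ is nonincreasing. Since $\int\phi\,d\mu_{t_0}=0$, this quantity stays zero, which forces $\mu_t$ to vanish on $\{\phi>0\}\supseteq B_{(1-\lambda)\rho}(x_0)$ for $t\le t_0+c_0\rho^2$. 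Passing from the measures to the support $\mathrm{spt}\mathcal{M}$ uses its definition as a closure together with the openness of $\{\phi>0\}$ in space-time, so we will work with a slightly larger radius and then shrink.

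The first step is the construction of $f$, which splits into two cases. If $x_0\in\Omega$, then for $\rho_0<\tfrac12 d_{\partial\Omega}(x_0)$ we simply take $f=|x-x_0|^2$ and the Neumann condition is vacuous. If $x_0$ is near or on $\partial\Omega$, we instead use Fermi-type coordinates adapted to $\partial\Omega$, namely $x=\exp_{p}(r\,\nu_{\partial\Omega}(p))$ with $p\in\partial\Omega$ and $r=d_{\partial\Omega}(x)$. We then set
\[
f(x)=|p-\bar p_0|^2+(r-r_0)^2,\qquad\text{or even } f(x)=|p-\bar p_0|^2+r^2 \text{ when } x_0\in\partial\Omega,
\]
suitably reflected, where $\bar p_0$ is the nearest point projection of $x_0$. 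In the general case we can reduce to $x_0\in\partial\Omega$, at the cost of replacing $\lambda$ by $\lambda/2$ and $\rho_0$ by a smaller value.

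Because $\partial_r$ is orthogonal to the level sets of $p$ and the coordinate $r$ is even-reflected, we get $Df\cdot\nu_{\partial\Omega}=\pm Df\cdot \partial_r|_{r=0}=0$. Moreover, smoothness of $\partial\Omega$ gives $|D^2f-2\,\mathrm{Id}|\le C\sqrt f$ and $|Df|^2\le 4f(1+C\sqrt f)$ on $B_{\rho_0}(x_0)$. We also have $(1-C\rho)|x-x_0|^2\le f\le(1+C\rho)|x-x_0|^2$, so the sublevel sets of $f$ are comparable to (half-)balls. These properties are precisely where $\rho_0$ enters.

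The second step is the computation itself. Set $u=\big(R^2-f-2(n+1)(1+C\rho_0)(t-t_0)\big)_+$ and $\phi=u^3$, and choose $R^2=(1-\lambda/2)^2\rho^2$ so that $\mathrm{spt}\,\phi(\cdot,t_0)\subset B_\rho(x_0)$. Then $D\phi\cdot\nu_{\partial\Omega}=0$, so $\phi$ is an admissible test function. We estimate the right-hand side of \eqref{brakke_ineq} by absorbing the first-order term, using
\[
D\phi\cdot H_\ast-\phi|H_\ast|^2\le \frac{|D\phi|^2}{4\phi}.
\]
We then replace the term $\frac{|D\phi|^2}{4\phi}$ by a divergence term via the first variation formula, which is valid because $D\phi$ is tangential to $\partial\Omega$. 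Concretely, $\int D\phi\cdot H_\ast\,d\mu=-\delta V(D\phi)=-\int \mathrm{div}_{V}D\phi\,d\mu$. This is the standard Ilmanen computation: we need
\[
\partial_t\phi+\mathrm{tr}_{V}D^2\phi\cdot(\ldots)\le0,
\]
that is, $\partial_t u\le -\mathrm{tr}_{T}D^2 f-\ldots$. This holds because $\mathrm{tr}_T D^2f\le 2n(1+C\rho_0)$, the cube power handles the gradient term, and the constant $2(n+1)(1+C\rho_0)$ has been chosen large enough. We then obtain $\bar D_t\int\phi\,d\mu_t\le0$. Finally, $u>0$ on $B_{(1-\lambda)\rho}$ as long as $t-t_0\le c_0\rho^2$ with $c_0\approx\lambda/(4(n+1))$.

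The main obstacle is the boundary: the first variation only controls tangential fields. The contribution of $H_\ast$ versus $H$ on $\partial\Omega$, namely the part of $\mu_t$ supported on $\partial\Omega$, must be handled by the tangentiality of $D\phi$, which is exactly why $f$ must satisfy the exact Neumann condition rather than an approximate one. We would first verify carefully that the even reflection in $r$ yields a $C^{1,1}$, and in fact smooth enough, $f$ whose Hessian bounds survive. If smoothness across $r=0$ is an issue, we would instead use $f=|p-\bar p_0|^2+r^2$, which is smooth in $\bar\Omega$ near $x_0$ since $r\ge 0$ there.
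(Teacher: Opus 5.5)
Your argument is correct in substance, but it follows a different route from the paper.

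\textbf{The paper's route.} The paper keeps the Euclidean test function $\big(1-\frac{|x-\hat x_0|^2+2n(t-t_0)}{\hat\rho^2}\big)_+^3$. When $d(x_0,\partial\Omega)\le\rho$, it moves the center inward to $\hat x_0=x_0-\frac{\lambda\rho}{4}\nu_{\partial\Omega}(y_0)$ and shrinks the radius to $\hat\rho=\rho-\frac{\lambda\rho}{4}$. This gives $(x-\hat x_0)\cdot\nu_{\partial\Omega}(x)\ge 0$ on $\partial\Omega\cap B_{\hat\rho}(\hat x_0)$, i.e.\ $D\phi\cdot\nu_{\partial\Omega}\le 0$. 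That sign is the only thing the computation of \cite[Lemma 4.4]{Bao} uses in place of convexity, so the computation is then reused as is.

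\textbf{Your route.} You replace $|x-x_0|^2$ by $f=|\pi(x)-x_0|^2+d_{\partial\Omega}(x)^2$, which satisfies the Neumann condition exactly. Then $\phi$ is admissible in \eqref{brakke_ineq} as literally defined, and $\int D\phi\cdot H_\ast\,d\mu_t=-\int\mathrm{div}_{V}D\phi\,d\mu_t$ holds with no boundary contribution at all. The price is the Hessian error $|D^2f-2\,\mathrm{Id}|\le C\sqrt f$, which you absorb with a faster shrinking speed. That bound is right, since $f-|x-x_0|^2=2\,d_{\partial\Omega}(x)\,\nu_{\partial\Omega}(\pi(x))\cdot(\pi(x)-x_0)$ vanishes to third order at $x_0$.

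\textbf{What each buys.} Your route is self-contained and avoids any sign-condition analysis at the barrier. The paper's shift treats all centers with $d(x_0,\partial\Omega)\le\rho$ at once, with constants depending only on $\lambda$ and the local geometry of $\partial\Omega$, and $c_0$ of order $\lambda$.

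Your case split (interior with $\rho_0<\frac12 d_{\partial\Omega}(x_0)$, or $x_0\in\partial\Omega$) proves the lemma as stated, since $\rho_0$ may depend on $x_0$. However:
\begin{enumerate}[(i)]
\item Its constants degenerate as $x_0$ approaches $\partial\Omega$ from inside.
\item Your reduction ``replace $\lambda$ by $\lambda/2$'' only works when $d(x_0,\partial\Omega)\ll\lambda\rho$.
\item The variant $(r-r_0)^2$ is not admissible: its normal derivative on $\partial\Omega$ has size $2r_0\neq 0$.
\end{enumerate}
Uniform constants are implicitly needed when the lemma is applied in the proof of Theorem \ref{thm_monotonicity_singular}. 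To get them in your framework, cover $B_{(1-\lambda)\rho}(x_0)$ by interior balls and boundary-centered balls of radius comparable to $\lambda\rho$ contained in $B_\rho(x_0)$. This works but only gives $c_0\sim\lambda^2$.

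\textbf{One step to delete.} The Cauchy--Schwarz absorption $D\phi\cdot H_\ast-\phi|H_\ast|^2\le|D\phi|^2/(4\phi)$ should be removed, not repaired. After it there is no $H_\ast$ left to apply the first variation to. On its own it yields $-3\beta u^2+\tfrac94u|Df|^2$, which is positive near $\partial\{u>0\}$.

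The correct computation is the one you write next. Apply the first variation to the tangential field $D\phi$ and discard $-\phi|H_\ast|^2\le 0$. With $\beta$ your shrinking speed and $P_T$ the projection onto the approximate tangent plane, this gives $\partial_t\phi-\mathrm{tr}_TD^2\phi=3u^2(\mathrm{tr}_TD^2f-\beta)-6u|P_TDf|^2\le 0$. The gradient term has the good sign on its own; the cube is only there for $C^2$ regularity.
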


\begin{proof}
For convex $\Omega$ this has already been shown in \cite[Lemma 4.4]{Bao}, and in fact the convexity assumption has only been used to ensure that
\begin{equation}
(x-x_0)\cdot \nu_{\partial \Omega}(x) \geq 0 \quad \text{for }x\in\partial\Omega\cap B_\rho(\hat x_0).
\end{equation}
Now, for general $\Omega$, if $d(x_0,\partial\Omega)\leq \rho$ we will shrink and shift inwards the ball slightly. Specifically,
 letting $y_0$ be the nearest point of $\partial\Omega$ to $x_0$, and denoting by $\nu_{\partial\Omega}(y_0)$ the outward unit normal as usual, set
\begin{equation}
  \varepsilon=\frac{\lambda\rho}{4},
  \qquad
  \hat x_0=x_0-\varepsilon\nu_{\partial\Omega}(y_0),
  \qquad
  \hat{\rho}=\rho-\varepsilon.
\end{equation}
Then, $B_{\hat \rho}(\hat x_0)\subseteq B_\rho(x_0)$, and the $C^2$-graph representation
of $\partial\Omega$ gives
\begin{equation}\label{eq:shifted-center-sign}
  (x-\hat x_0)\cdot\nu_{\partial\Omega}(x)
\geq0
  \quad
  \text{for }x\in\partial\Omega\cap B_{\hat \rho}(\hat x_0),
\end{equation}
provided $\rho_0$ is small enough. Hence, considering the standard test function
\begin{equation}
  \phi(x,t)=
  \left(
    1-\frac{|x-\hat x_0|^2+2n(t-t_0)}{{\hat \rho}^2}
  \right)_+^3
\end{equation}
we can conclude the proof similarly as in \cite[Lemma 4.4]{Bao}.
\end{proof}

We can now prove Theorem \ref{thm_monotonicity_singular}, which we restate for convenience:

\begin{theorem}
Let $\mathcal{M}=\{\mu_t\}$ be a free boundary intergral Brakke flow in any domain $\bar{\Omega}\subset\mathbb{R}^{n+1}$. Then, for any smooth compact free boundary flow $\mathcal{N}=\{N_t\}_{t\in [t_0,t_1]}$, choosing $\gamma=\gamma(\mathcal{N})>0$ sufficiently small,
\begin{equation}
t\mapsto \tilde{d}_{\mathcal{N}}\left((\mathrm{spt}\mathcal{M})_t\right)\quad\textrm{is monotone}.
\end{equation}
In particular, $\mathrm{spt}\mathcal{M}$ satisfies the free boundary avoidance principle.
\end{theorem}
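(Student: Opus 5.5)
We follow Ilmanen's scheme for the avoidance principle for Brakke flows, replacing the Euclidean distance by the signed twisted Fermi distance $\tilde d_t$ and using the Neumann identity \eqref{prop_twist_f2} so that admissible test functions are produced.

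\textbf{Reduction.} Fix a time $t'$ with $0<\delta:=\tilde d_{\mathcal{N}}((\mathrm{spt}\mathcal{M})_{t'})<\gamma$. It suffices to show that for every $\delta'<\delta$ the set $(\mathrm{spt}\mathcal{M})_t$ stays inside $\{|\tilde d_t|> \delta'\}$ for all $t\in[t',t_1]$. Local-in-time statements patch together by continuity, and Lemma \ref{lemma_halfball} controls the behaviour just after $t'$. The mechanism is that the support cannot jump inwards instantaneously: by the half-ball lemma, points at distance $\rho$ from the support at time $t'$ remain free of the support on a time interval of length $c_0\rho^2$. Each sign of $\tilde d_t$ is handled separately, flipping the orientation of $\mathcal{N}$ for the negative side, exactly as in the smooth proof.

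\textbf{Test function.} Consider the region $\{0<\tilde d_t<\gamma\}$. Choose a smooth nonincreasing function $f$ with $f(s)=0$ for $s\ge\delta''$ (where $\delta'<\delta''<\delta$), $f>0$ for $s<\delta''$, and $f$ convex. Set
\[
\phi(x,t)=f\bigl(\tilde d_t(x)+\beta(t-t')\bigr)\cdot(\text{cutoff in the region }\{\tilde d_t>\delta'/2\}),
\]
with a small constant $\beta>0$ to gain strictness. The cutoff is needed because the side $\tilde d_t\le 0$ is separated by the part of $\{|\tilde d_t|\le\delta'\}$ that the support does not meet initially; alternatively one runs the argument on a time interval before the support could reach that part.

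By \eqref{prop_twist_f2}, $D\phi\cdot\nu_{\partial\Omega}=0$ on $\partial\Omega$, so $\phi$ is admissible in \eqref{brakke_ineq}. A direct computation, together with $|D\tilde d|=1$, gives
\[
\partial_t\phi+D\phi\cdot H_\ast-\phi H_\ast^2 \le f'\bigl((\partial_t-\Delta_\Omega)\tilde d+\beta\bigr)+\text{(terms involving } \mathrm{div}_{M}D\phi\text{ and }f''\text{)}.
\]
After the first-variation identity (which holds for $X=D\phi$, since $D\phi$ is tangential to $\partial\Omega$) and the absorption $D\phi\cdot H-\phi H^2\le |D\phi|^2/(4\phi)$, the error is of the form
\[
f''\,|D^\perp\tilde d|^2+\frac{f'^2}{4f}+f'\,D^2\tilde d(\nu,\nu).
\]
The $f$-dependent terms are controlled by choosing $f(s)=(\delta''-s)_+^4$, which makes $f'^2/f\lesssim |f''|$ up to sign. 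The remaining term $f'D^2\tilde d(\nu,\nu)$, where $D^2\tilde d$ is bounded on $\{|\tilde d|<\gamma\}$, is absorbed by choosing $\beta$ large relative to it, at the cost of shrinking the time interval so that $\beta(t-t')<\delta''-\delta'$.

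Combining this with Proposition \ref{prop_evol_twisted} and $f'\le 0$, we get $\bar D_t\int\phi\,d\mu_t\le 0$. Since $\int\phi\,d\mu_{t'}=0$, the integral stays zero, so no support of $\mu_t$ enters $\{\tilde d_t<\delta''-\beta(t-t')\}$ for almost every $t$. Lemma \ref{lemma_halfball} upgrades this from the measures $\mu_t$ to $\mathrm{spt}\mathcal{M}$, including the closure in space-time. Iterating over short time intervals, with $\delta''\uparrow\delta$, $\beta\to0$ along a partition, and a discrete Gronwall-type summation of the $\beta$-losses, yields monotonicity. The avoidance statement then follows: if the supports are initially disjoint, then $\tilde d_{\mathcal{N}}>0$ at $t_0$ and stays positive, because the set $\{\tilde d_t=0\}$ is exactly $N_t$.

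\textbf{Main obstacle.} The hardest step is the error term $D^2\tilde d(\nu,\nu)$. In the smooth proof it vanished at the minimum point, but in the integrated Brakke inequality it is present throughout the support of $\phi$. My first attempt would be to make $\phi$ depend on $\tilde d$ through a steep function such as $e^{-K\tilde d}$, in the spirit of Ilmanen's sub-solution argument, and use the positive term $f''|\nabla^M\tilde d|^2$ to absorb it. If that fails, the fallback is to prove that the support's distance function is a viscosity supersolution, using $\tilde d$-level sets as smooth barriers via the smooth computation and Lemma \ref{lemma_halfball}.
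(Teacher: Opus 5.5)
There is a genuine gap in how you treat the term $f'\,D^2\tilde d_t(\nu,\nu)$, which you rightly identify as the main obstacle.

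The drift $\beta(t-t')$ does absorb this term pointwise, but only if $\beta\geq\sup_{\{|\tilde d_t|<\gamma\}}|D^2\tilde d_t|$. That is a constant fixed by $\mathcal{N}$, essentially the curvature of the level sets $\{\tilde d_t=s\}$. Shrinking the time interval does not lower this threshold, so ``$\beta\to0$ along a partition'' is not available. The losses over any partition of $[t',t'']$ add up to $\beta(t''-t')$. What your argument actually proves is the Lipschitz bound $\tilde d_{\mathcal{N}}((\mathrm{spt}\mathcal{M})_t)\geq\tilde d_{\mathcal{N}}((\mathrm{spt}\mathcal{M})_{t'})-\beta(t-t')$. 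This is neither monotonicity nor even avoidance, since it lets the distance reach zero in finite time.

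Your error bookkeeping is also off. After the first-variation identity $\int D\phi\cdot H_\ast\,d\mu_t=-\int\mathrm{div}_{V_{\mu_t}}D\phi\,d\mu_t$ you simply discard $-\phi H_\ast^2\leq0$, so no $f'^2/(4f)$ term appears. For $\phi=f(\tilde d_t)$ the resulting integrand $\partial_t\phi-\mathrm{div}_{V_{\mu_t}}D\phi$ equals
\[
f'\bigl((\partial_t-\Delta_\Omega)\tilde d_t+D^2\tilde d_t(\nu,\nu)\bigr)-f''\bigl(1-(\nu\cdot D\tilde d_t)^2\bigr).
\]
Here $1-(\nu\cdot D\tilde d_t)^2=|\nabla^{M}\tilde d_t|^2$ is the squared \emph{tangential} gradient, and the $f''$ term has the favorable sign for convex $f$.

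The missing idea, which the paper uses, is structural. Differentiating $|D\tilde d_t|^2=1$ gives $D^2\tilde d_t(D\tilde d_t,\cdot)=0$. Splitting $\nu$ into its components along and orthogonal to $D\tilde d_t$ then yields $|D^2\tilde d_t(\nu,\nu)|\leq C\bigl(1-(\nu\cdot D\tilde d_t)^2\bigr)$. So the bad term is dominated by the good convexity term whenever $C|f'|\leq f''$.

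The paper's test function is $\phi=h(|\tilde d_t|)$ with $h(r)=(\delta-r)_+^3$ for $r\geq\delta/2$ and $h\equiv\delta^3$ for $r\leq\delta/4$. This treats both sides of $N_t$ at once and needs no extra cutoff. The condition then reads $3Cs^2\leq6s$ for $s=(\delta-|\tilde d_t|)_+\leq\delta<\gamma$, which holds once $\gamma$ is small. One gets $\bar D_t\int\phi\,d\mu_t\leq0$ directly, with no drift and no partition. Lemma~\ref{lemma_halfball} is used only to restrict to times at which the support stays in $\{|\tilde d_t|\geq\delta/2\}$. Letting $\delta$ increase to the initial distance gives monotonicity.

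Your steep-function idea in the last paragraph points the right way, but it works only once this inequality is available. Steepness by itself cannot absorb a term that is not controlled by $|\nabla^{M}\tilde d_t|^2$.
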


\begin{proof}Fix $\gamma=\gamma(\mathcal{N})>0$ sufficiently small. Given any $\delta>0$, with
\begin{equation}
\delta<\min\{\gamma, \tilde{d}_{\mathcal{N}}\left((\mathrm{spt}\mathcal{M})_{t_0}\right)\},
\end{equation}
motivated by \cite{Ilmanen_monograph,HershkovitsWhite_avoidance,Bao} we consider the test-function
\begin{equation}
\phi(x,t)=h(|\tilde{d}_t(x)|),
\end{equation}
where $h$ is a smooth monotone function, such that $h(r)=(\delta-r)_+^3$ for $r\geq \delta/2$ and $h(r)=\delta^3$ for $r\leq \delta/4$. Plugging this into the Brakke inequality \eqref{brakke_ineq} and using integration by parts, we infer that
\begin{equation}
\bar{D}_t\int\phi\, d\mu_t \leq  \int ( \partial_t  - \mathrm{div}_{V_{\mu_t}}D) \phi\, d\mu_t,
\end{equation}
where there is no boundary term thanks to $D\phi\cdot \nu_{\partial \Omega}=0$, c.f. \cite[Proposition 3.2]{Edelen}. To proceed, observe that by definition of the signed twisted Fermi distance we have $0.9 d_t\leq \tilde{d}_t\leq 1.1 d_t$ on $\mathrm{spt}(\phi)$ provided $\gamma$ is small enough. Hence, by Lemma \ref{lemma_halfball} it suffices to consider times $t$ with $ \tilde{d}_{\mathcal{N}}\left((\mathrm{spt}\mathcal{M})_t\right)\geq \delta/2$. Then, on $\mathrm{spt}(\mu_t)\cap \{\tilde{d}_t\geq 0\}$ we have
\begin{align}\label{equ:operphi}
(\partial_t-\operatorname{div}_{V_{\mu_t}}D)\phi &=(\partial_t-\Delta_{\Omega})\phi+D^2\phi(\nu_{V_{\mu_t}},\nu_{V_{\mu_t}})\nonumber\\
&\leq -3 s^2 D^2 \tilde{d}_t(\nu_{V_{\mu_t}}, \nu_{V_{\mu_t}})-6 s\big(1-(\nu_{V_{\mu_t}} \cdot D \tilde{d}_t)^2\big),
\end{align}
where we used Proposition \ref{prop_evol_twisted} and abbreviated $s=(\delta-|\tilde{d}_t|)_+$. Moreover, since $D^2 \tilde{d}_t$ is a finite dimensional quadratic form that vanishes in
direction $D \tilde{d}_t$, as can be seen by differentiating $|D\tilde{d}_t|^2 = 1$, we have
\begin{equation}
|D^2 \tilde{d}_t(\nu_{V_{\mu_t}}, \nu_{V_{\mu_t}})|\leq C\big(1-(\nu_{V_{\mu_t}} \cdot D \tilde{d}_t)^2\big).
\end{equation}
Since $3Cs^2\leq 6s$ for $\gamma$ small enough, this yields
\begin{equation}
\bar{D}_t\int\phi\, d\mu_t \leq 0.
\end{equation}
This implies the assertion.
\end{proof}

\bigskip

\section{Applications to flows through singularities}\label{sec_applications}

In this short final section, we explain how the applications follow.

\begin{proof}[Proof of Theorem \ref{cor_matching}]
In the matching Brakke flow proof in \cite[Theorem 1.8]{Bao} convexity of $\Omega$ only entered in the following two ways:
\begin{enumerate}[(i)]
\item Via \cite[Lemma 5.4]{Bao}, which for any smooth compact free boundary hypersurface $M\subset\bar{\Omega}$ constructs a nontrivial one-parameter family $\{M^s\}_{s=(-\eps,\eps)}$ of free boundary hypersurfaces with $M^0=M$.
\item Via the avoidance principle for free boundary integral Brakke flows from \cite[Theorem 1.5]{Bao}, which assumed convexity.
\end{enumerate}
Using our twisted Fermi distance we can now prove \cite[Lemma 5.4]{Bao} by simply setting $M^s=\{ \tilde{d}=s\}$, which is much shorter and works for general domains without any convexity assumption. Hence, the corollary follows by using Theorem \ref{thm_monotonicity_singular} in lieu of \cite[Theorem 1.5]{Bao}.
\end{proof}

\begin{proof}[Proof of Theorem \ref{thm_mcn}]
Using Corollary \ref{cor_matching} in lieu of \cite[Theorem 1.8]{Bao}, and Theorem \ref{thm_monotonicity_singular} in lieu of \cite[Theorem 1.5]{Bao}, the proof of the mean-convex neighborhood theorem from \cite[Theorem 1.2]{BaoHaslhofer} goes through verbatim for general domains.
\end{proof}

\begin{proof}[Proof of Theorem \ref{thm_nonfatt}]
In the proof of the nonfattening theorem in \cite[Theorem 1.4]{BaoHaslhofer} we considered the function
$w=\varphi f + (1-\varphi)d_t$,
which interpolates between the arrival time function $f$ near the singularities and the signed distance $d_t$ from $M_t$ in the smooth part. Here, $\varphi$ is a suitable cutoff function that is identically $1$ in a neighborhood of the singularities. The convexity assumption on the domain was used to ensure that $\pm Dd_t\cdot \nu_{\partial\Omega}\geq 0$ when $\pm d_t\geq 0$.
We now replace $d_t$ by the signed twisted Fermi distance $\tilde{d}_t$. Observing that $\tilde{d}_t$ is smooth in a neighborhood of the bounded curvature part with the estimate $|D\tilde{d}_t - Dd_t|<\eps$, and satisfies $D\tilde{d}_t\cdot \nu_{\partial\Omega}=0$ on $\partial\Omega$, the argument from the proof of \cite[Theorem 1.4]{BaoHaslhofer}  goes through for general domains.
\end{proof}

\bigskip

\bibliography{BaoHaslhofer_fb_avoidance}

\bibliographystyle{abbrv}

\end{document}